\documentclass[11pt,a4paper,draft]{article}

\usepackage[ngerman,english]{babel}
\usepackage[utf8]{inputenc}

\usepackage[DIV=14]{typearea}

\usepackage[affil-it]{authblk}

\usepackage{xparse}

\usepackage{amsmath, amsfonts, amssymb, amsthm}
\usepackage{mathrsfs}
\usepackage{mathtools}

\usepackage[noadjust]{cite}

\usepackage[capitalise]{cleveref}

\crefname{equation}{}{}

\newcommand{\R}{\mathbb{R}}

\newcommand{\N}{\mathbb{N}}

\newcommand{\ee}{\mathrm{e}}

\DeclareDocumentCommand\dd{ o g d() }{
	\IfNoValueTF{#2}{
		\IfNoValueTF{#3}
			{\mathrm{d}\IfNoValueTF{#1}{}{^{#1}}}
			{\mathinner{\mathrm{d}\IfNoValueTF{#1}{}{^{#1}}\argopen(#3\argclose)}}
		}
		{\mathinner{\mathrm{d}\IfNoValueTF{#1}{}{^{#1}}#2} \IfNoValueTF{#3}{}{(#3)}}
	}

\newcommand{\del}{\partial}
\newcommand{\eps}{\varepsilon}
\newcommand{\vcc}{\vcentcolon}

\DeclarePairedDelimiter\abs{\lvert}{\rvert}
\DeclarePairedDelimiter\norm{\Vert}{\rVert}

\theoremstyle{plain}
\newtheorem{theorem}{Theorem}[section]
\newtheorem{lemma}[theorem]{Lemma}
\newtheorem{proposition}[theorem]{Proposition}

\theoremstyle{definition}

\theoremstyle{remark}
\newtheorem{remark}[theorem]{Remark}

\title{Continuum limit for interacting systems on adaptive networks}
\author{Sebastian Throm \thanks{\texttt{sebastian.throm@umu.se}}}
  
 \affil{\em Ume{\aa} University,  Department of Mathematics and Mathematical Statistics, 901 87 Umeå }
\date{}

\begin{document}

\maketitle

\begin{abstract}
 The article considers systems of interacting particles on networks with adaptively coupled dynamics. Such processes appear frequently in natural processes and applications. Relying on the notion of graph convergence, we prove that for large systems the dynamics can be approximated by the corresponding continuum limit. Well-posedness of the latter is also established. 
\end{abstract}

\section{Introduction}

Models of collectively interacting particles play a crucial role in many branches of the natural sciences including biological systems, industrial processes and social activities \cite{JaK01,SYT02,CFL09,CFT10,WHK22}. Many of these real-world examples exhibit an underlying network structure and consequently, there has been an increasing interest during the last years in corresponding mathematical models \cite{BBV08,PoG16,AyP21,WHK22,Bur22,NGW23}. In the case of finite systems, i.e.\@ where finitely many particles interact, this typically leads to a large system of coupled ordinary differential equations (ODEs):
\begin{equation}\label{eq:intro:1}
 \dot{\phi}_{k}=\frac{1}{N}\sum_{\ell=1}^{N}\kappa_{k\ell}g(\phi_{k},\phi_{\ell}) \qquad \text{with } k=1,\ldots,N.
\end{equation}
Here $\phi_k(t)$ describes the state of the $k$'s particle at time $t$, the function $g$ models the interaction between two particles and $\kappa_{k\ell}$ corresponds to the adjacency matrix of the underlying network. More precisely, each particle is assumed to be located at the node of a graph consisting of $N$ nodes which are labeled by $1,\ldots,N$. The quantity $\kappa_{k\ell}$ denotes the weight of the edge between the nodes $k$ and $\ell$. One of the most prominent examples is the classical \emph{Kuramoto model} where $g(\phi_k,\phi_\ell)=\sin(\phi_\ell-\phi_k)$ and $\kappa_{k\ell}\equiv\kappa$ \cite{Kur84}.

In many applications the number $N$ of involved particles is so large that the evolution of the whole system is not tractable. Instead, one is interested in continuous limiting descriptions when $N\to\infty$ and, for systems without a network structure, i.e.\@ $\kappa_{k\ell}\equiv const$, there is a well established theory available \cite{Gol16}. Moreover, in recent years, based on the notion of graph convergence, it has been possible to extend these methods to situations with an underlying network \cite{Med14,KaM17,KaM18,KuT19}. More precisely, assuming that the (stationary) graph structure has for $N\to\infty$ a suitable limiting \emph{graphon}, corresponding continuum and mean-field models have been derived.

\subsection{Coupled oscillators on adaptive networks}

However, for many systems, the network structure is not fixed but instead, it evolves in time while this evolution is often also coupled to the particle dynamics. A special case is given by the following adaptively coupled Kuramoto model considered in \cite{BFK19}:
\begin{equation}\label{eq:Kuramoto:adaptive}
 \begin{split}
  \dot{\phi}_{k}&=\omega-\frac{1}{N}\sum_{\ell=1}^{N}\kappa_{k\ell}\sin(\phi_{\ell}-\phi_{k}+\alpha)\\
  \dot{\kappa}_{k\ell}&=-\eps(\sin(\phi_{k}-\phi_{\ell}+\beta)+\kappa_{k\ell}).
 \end{split}
\end{equation}
Moreover, in \cite{HNP16} the synchronisation of oscillators following the slightly generalised model
\begin{equation}\label{eq:dyn:model:1}
 \begin{split}
  \dot{\phi}_{k}&=\frac{1}{N}\sum_{\ell=1}^{N}\kappa_{k\ell}g(t,\phi_{\ell}-\phi_{k})+\omega_{k}\\
  \dot{\kappa}_{k\ell}&=\Gamma(\phi_{\ell}-\phi_{k})-\gamma\kappa_{k\ell}.
 \end{split}
\end{equation}
has been considered. By means of Duhamel's formula one can solve the second equation explicitly which yields $\kappa_{k\ell}(t)=\kappa_{k\ell}(0)\ee^{-\gamma t}+\int_{0}^{t}\Gamma(\phi_{\ell}(s)-\phi_{k}(s))\ee^{-\gamma(t-s)}\dd{s}$. Plugging this expression back into the first equation reduces the problem again to an equation with a stationary network up to an additional time integration. In \cite{GKX21} the continuum limit has been derived for this kind of graph dynamics.
\subsection{A generalised model}
In this work, we will consider the following generalised model where the evolution of $\kappa_{k\ell}$ does not only depend on its current state and the dynamics of $\phi_k$ and $\phi_\ell$ but instead, it might be influenced by the whole system. Moreover, we allow each edge/weight of the network to follow its own dynamics. In fact, we will study the model
\begin{equation}\label{eq:gen:model}
 \begin{split}
  \dot{\phi}_{k}&=\frac{1}{N}\sum_{\ell=1}^{N}\kappa_{k\ell}g(t,\phi_{k},\phi_{\ell})+f_{k}(t,\phi)\\
  \dot{\kappa}_{k\ell}&=\Lambda_{k\ell}(t,\kappa,\phi).
 \end{split}
\end{equation}
with continuous functions $f_{k}\colon [0,\infty)\times (\R^{d})^{N}\to \R^{d}$, $g\colon [0,\infty)\times (\R^{d})^2\to \R^{d}$ and $\Lambda_{k\ell}\colon [0,\infty)\times \R^{N\times N}\times(\R^d)^{N}\to \R$ whose properties will be specified more closely later and $\phi=(\phi_1,\ldots,\phi_N)$ as well as $\kappa=(\kappa_{k\ell})_{k,\ell=1}^{N}$.

\subsection{Assumptions and main result}

In order to derive the continuum limit, we follow the same approach developed e.g.\@ in \cite{Med14} which has also been exploited in \cite{AyP21}. For this aim, we parametrise the discrete system and the underlying graph over the sets $I=[0,1)$ and $I\times I=[0,1)\times [0,1)$ respectively. Precisely, denoting $I_{k}=[(k-1)/N,k/N)$ we set
\begin{equation}\label{eq:parametrisation}
 \begin{split}
  u^N(t,x)&\vcc=\sum_{k=1}^{N}\phi_{k}(t)\chi_{I_{k}}(x)\\
  K^{N}(t,x,y)&\vcc=\sum_{k,\ell=1}^{N}\kappa_{k\ell}(t)\chi_{I_{k}}(x)\chi_{I_{\ell}}(y)
 \end{split}
\end{equation}
where $\chi_{I_k}$ is the characteristic function of $I_k$. Moreover, given $\Lambda\colon [0,\infty)\times I\times I \times L^{\infty}(I\times I,\R) \times L^{\infty}(I,\R^d)\to \R$ and $f\colon[0,\infty)\times I\times L^{\infty}(I,\R^{d})\to \R^{d}$ satisfying the properties \cref{eq:Ass:Lambda,eq:Ass:f:g} below, we can reconstruct a corresponding discrete system via
\begin{equation}\label{eq:def:Lambda:discrete}
 \begin{aligned}
   \Lambda_{k\ell}(t,\kappa,\phi)&\vcc=N^2\int_{I_{k}\times I_{\ell}}\Lambda(t,x,y,K^{N}(t,\cdot,\cdot),u^{N}(t,\cdot))\dd{x}\dd{y}\\
   f_{k}(t,\phi)&\vcc= N\int_{I_{k}}f(t,x,u^{N}(t,\cdot))\dd{x}.
 \end{aligned}
\end{equation}
With this notation, \eqref{eq:gen:model} can be rewritten as the following integral equation
\begin{equation}\label{eq:def:u}
 \begin{split}
   \del_{t}u^{N}(t,x)&=\int_{I}K^{N}(t,x,y)g(t,u^{N}(t,x),u^{N}(t,y))\dd{y}+N\int_{\lfloor Nx\rfloor/N}^{(\lfloor Nx\rfloor+1)/N}f(t,\xi,u^{N}(t,\cdot))\dd{\xi}\\
   \del_{t}K^{N}(t,x,y)&=N^2\int_{\lfloor Nx\rfloor/N}^{(\lfloor Nx\rfloor+1)/N}\int_{\lfloor Ny\rfloor/N}^{(\lfloor Ny\rfloor+1)/N}\Lambda(t,\xi,\eta,K^{N}(t,\cdot,\cdot),u^{N}(t,\cdot))\dd{\xi}\dd{\eta}.
 \end{split}
\end{equation}
We assume that $\Lambda\colon [0,\infty)\times I\times I \times L^{\infty}(I\times I,\R) \times L^{\infty}(I,\R^d)\to \R$ is continuous and satisfies
\begin{equation}\label{eq:Ass:Lambda}
 \begin{aligned}
 \norm{\Lambda(t,\cdot,\cdot,K_1,u_1)-\Lambda(t,\cdot,\cdot,K_2,u_2)}_{L^2(I^2)}&\leq L_{\Lambda}\bigl(\norm{K_1-K_2}_{L^2(I^2)}+\norm{u_1-u_2}_{L^2(I)}\bigr)\\
  \abs{\Lambda(t,x,y,K,u)}&\leq B_{\Lambda}(1+\norm{K}_{L^{\infty}(I^2)}).
  \end{aligned}
\end{equation}
Moreover, we assume that $g\colon[0,T]\times (\R^{d})^2\to \R^{d}$ and $f\colon [0,T]\times I\times L^{\infty}(I,\R^{d})\to \R^{d}$ are continuous and satisfy the following estimates for all $\xi,\xi_1,\xi_2,\eta,\eta_1,\eta_2\in\R^d$ and  $u,u_1,u_2\in L^{\infty}(I,\R^{d})$ uniformly in $t$:
\begin{equation}\label{eq:Ass:f:g}
 \begin{aligned}
  \abs{f(t,\cdot,u)}&\leq B_{f}(1+\norm{u}_{L^{\infty}}) &&\text{and} &  \norm{f(t,\cdot,u_1)-f(t,\cdot,u_2)}_{L^{2}(I)}&\leq L_{f}\norm{u_1-u_2}_{L^{2}(I)}\\
  \abs{g(t,\xi,\eta}&\leq B_{g} && \text{and} & \abs{g(t,\xi_1,\eta_1)-g(t,\xi_2,\eta_2)}&\leq L_{g}(\abs{\xi_1-\xi_2}+\abs{\eta_1-\eta_2}).
 \end{aligned}
\end{equation}

Our main result in this work is the following theorem which states that in the limit of infinitely many particles, the discrete system \eqref{eq:gen:model} can be approximated by the integro-differential equation~\eqref{eq:gen:cont:limit}.

\begin{theorem}\label{Thm:cont:limit}
 Let $f,g\colon [0,T]\times \R^d\to \R^d$ satisfy \eqref{eq:Ass:f:g} and let $\Lambda\colon [0,\infty)\times I\times I \times L^{\infty}(I\times I,\R) \times L^{\infty}(I,\R^d)\to \R$ satisfy \eqref{eq:Ass:Lambda}. Assume that $K^{N}(0,\cdot,\cdot)$ has a limiting graphon $W$ with respect to $\norm{\cdot}_{L^{2}}$ which is uniformly bounded, i.e.\@ $\lim_{N\to\infty}\norm{K^{N}(0,\cdot,\cdot)-W}_{L^{2}(I\times I)}=0$ and $\norm{W}_{L^{\infty}(I\times I)}<\infty$. Then, as $N\to\infty$, the parametrisation $(u^{N},K^{N})$ given in \eqref{eq:def:u} which corresponds to the discrete system~\eqref{eq:gen:model} with \eqref{eq:def:Lambda:discrete} converges to its continuum limit $(u,K)$, i.e.\@ the unique solution of
\begin{equation}\label{eq:gen:cont:limit}
 \begin{aligned}
      \del_{t}u(t,x)&=\int_{I}K(t,x,y)g(t,u(t,x),u(t,y))\dd{y}+f(t,x,u(t,\cdot))\\
   \del_{t}K(t,x,y)&=\Lambda(t,x,y,K(t,\cdot,\cdot),u(t,\cdot))
 \end{aligned}
\end{equation}
with $K(0,\cdot,\cdot)=W$ provided that the initial value $u^{N}(0,\cdot)$ converges to $u_{0}=u(0,\cdot)$ with respect to $\norm{\cdot}_{L^2}$, i.e.\@ $\lim_{N\to\infty}\norm{u^{N}(0,\cdot)-u(0,\cdot)}_{L^{2}(I)}=0$. More precisely, we have
\begin{equation*}
 \lim_{N\to\infty}\sup_{t\in[0,T]}\Bigl(\norm{u^{N}-u}_{L^{2}(I)}^{2}+\norm{K^{N}(t,\cdot,\cdot)-K(t,\cdot,\cdot)}_{L^{2}}^{2}\Bigr)=0.
\end{equation*}
\end{theorem}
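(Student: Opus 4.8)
The plan is to prove the statement in two stages: first establish well-posedness of the continuum system \eqref{eq:gen:cont:limit} by a contraction argument, and then control the difference between $(u^N,K^N)$ and $(u,K)$ by a single Grönwall estimate for the combined energy $E(t)\vcc=\norm{u^N(t,\cdot)-u(t,\cdot)}_{L^2(I)}^2+\norm{K^N(t,\cdot,\cdot)-K(t,\cdot,\cdot)}_{L^2(I^2)}^2$. The one structural ingredient needed throughout is an a priori $L^\infty$-bound on the kernel: since $\del_t K=\Lambda$ and $\abs{\Lambda(t,x,y,K,u)}\le B_\Lambda(1+\norm{K}_{L^\infty(I^2)})$ holds pointwise, Grönwall's inequality yields $\sup_{t\in[0,T]}\norm{K(t,\cdot,\cdot)}_{L^\infty}\le(\norm{W}_{L^\infty}+B_\Lambda T)\ee^{B_\Lambda T}=\vcc M$, and the identical bound holds for $K^N$ because the cell averaging in \eqref{eq:def:u} satisfies $N^2\abs{I_k}\abs{I_\ell}=1$. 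Crucially, the convergence estimates below will use only the bound on the \emph{continuum} kernel $K$, so that no uniform $L^\infty$-control of $K^N(0,\cdot,\cdot)$ (which the hypothesis does not provide) is required.

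For well-posedness I would run Banach's fixed point theorem for the integral form $u(t)=u_0+\int_0^t[\,\cdots\,]\dd{s}$, $K(t)=W+\int_0^t\Lambda\dd{s}$ on the closed set $\{(u,K)\in C([0,T'],L^2(I)\times L^2(I^2)):\sup_t\norm{K(t)}_{L^\infty}\le 2M,\ (u,K)(0)=(u_0,W)\}$ for $T'$ small. This set is preserved by the map thanks to the affine growth bound on $\Lambda$ (via the same Grönwall estimate), and it is closed in the $C([0,T'],L^2\times L^2)$-metric since lower semicontinuity of the $L^\infty$-norm under $L^2$-convergence is inherited along subsequences. That the map is a contraction in the $L^2$-metric follows from the Lipschitz bounds \eqref{eq:Ass:Lambda} and \eqref{eq:Ass:f:g}: writing the bilinear term as $K_1g(u_1(x),u_1(y))-K_2g(u_2(x),u_2(y))=(K_1-K_2)g(u_1(x),u_1(y))+K_2[g(u_1(x),u_1(y))-g(u_2(x),u_2(y))]$ and using $\abs{g}\le B_g$, the Lipschitz estimate for $g$, Cauchy--Schwarz in $y$, and $\norm{K_2}_{L^\infty}\le 2M$, one bounds the $L^2(I)$-norm of the difference of right-hand sides by $C(M)(\norm{u_1-u_2}_{L^2}+\norm{K_1-K_2}_{L^2})$. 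Since the growth bounds are affine, the local solution extends to all of $[0,T]$, which gives the unique solution $(u,K)$.

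For the convergence I would subtract \eqref{eq:gen:cont:limit} from \eqref{eq:def:u}. Denoting by $P_N$ the $L^2(I)$-orthogonal projection onto functions that are constant on each cell $I_k$ (so $P_Nh(x)=N\int_{I_{\lfloor Nx\rfloor+1}}h$) and by $P_N\otimes P_N$ its two-dimensional analogue, the $f$- and $\Lambda$-terms in \eqref{eq:def:u} are exactly $P_N[f(t,\cdot,u^N)]$ and $(P_N\otimes P_N)[\Lambda(t,\cdot,\cdot,K^N,u^N)]$. Splitting each right-hand side into a \emph{stability} part (the difference of the nonlinearities at the two solutions) and a \emph{consistency} part (the discretisation error of the exact solution), the contractivity $\norm{P_N}\le 1$ together with the Lipschitz bounds gives
\[
 \norm{\del_t(u^N-u)}_{L^2}\le C(M)\bigl(\norm{u^N-u}_{L^2}+\norm{K^N-K}_{L^2}\bigr)+\norm{R_N^u}_{L^2},
\]
and the analogous inequality for $K^N-K$ with error $R_N^K$, where $R_N^u(t)=P_N[f(t,\cdot,u(t))]-f(t,\cdot,u(t))$ and $R_N^K(t)=(P_N\otimes P_N)[\Lambda(t,\cdot,\cdot,K(t),u(t))]-\Lambda(t,\cdot,\cdot,K(t),u(t))$. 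Here the bilinear term is handled by the same splitting as in the well-posedness step, pairing the bounded continuum kernel $K$ with the $g$-difference so that only $\norm{K}_{L^\infty}\le M$ enters. Differentiating $E$ and applying Cauchy--Schwarz and Young's inequality then yields $E'(t)\le C E(t)+C(\norm{R_N^u(t)}_{L^2}^2+\norm{R_N^K(t)}_{L^2}^2)$, and Grönwall gives $\sup_{t\in[0,T]}E(t)\le \ee^{CT}(E(0)+CT\sup_t(\norm{R_N^u}_{L^2}^2+\norm{R_N^K}_{L^2}^2))$.

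The main obstacle is showing that the consistency errors vanish \emph{uniformly} in time, i.e.\ $\sup_{t\in[0,T]}\norm{R_N^u(t)}_{L^2}\to0$ and likewise for $R_N^K$. Pointwise in $t$ this is merely the statement that $P_N\to\mathrm{Id}$ strongly on $L^2$; the uniformity comes from compactness. Indeed, the maps $t\mapsto f(t,\cdot,u(t,\cdot))\in L^2(I)$ and $t\mapsto\Lambda(t,\cdot,\cdot,K(t),u(t))\in L^2(I^2)$ are continuous on the compact interval $[0,T]$ (using continuity of $f,\Lambda$, the Lipschitz bounds, and $u,K\in C([0,T],L^2)$), hence have compact range; since a uniformly bounded sequence of operators converging strongly converges uniformly on compact sets, $\sup_t\norm{(P_N-\mathrm{Id})f(t,\cdot,u(t))}_{L^2}\to0$ and similarly for $\Lambda$. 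Combined with $E(0)=\norm{u^N(0)-u_0}_{L^2}^2+\norm{K^N(0)-W}_{L^2}^2\to0$ from the hypotheses, the Grönwall bound forces $\sup_{t\in[0,T]}E(t)\to0$, which is precisely the assertion.
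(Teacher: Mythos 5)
Your proposal is correct and follows essentially the same route as the paper: an energy/Gr\"onwall estimate for $\norm{u^N-u}_{L^2(I)}^2+\norm{K^N-K}_{L^2(I^2)}^2$ with the same stability--consistency splitting (in particular the same decomposition of the bilinear term, pairing the $g$-difference with the continuum kernel so that only $\norm{K}_{L^\infty}$ enters and no uniform $L^\infty$ control of $K^N$ is needed), preceded by the $L^2$-contraction well-posedness argument that the paper isolates in Section~\ref{Sec:well:posed}. The only notable difference is that you derive $\sup_{t}\norm{R_N(t)}_{L^2}\to0$ uniformly in $t$ via compactness of the range of $t\mapsto f(t,\cdot,u(t,\cdot))$ together with uniform strong convergence of the projections $P_N$ on compact sets, whereas the paper keeps the consistency errors under a time integral and sends them to zero by Lebesgue differentiation plus dominated convergence; both are valid.
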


\subsection{Relation to previous results}

Theorem~\ref{Thm:cont:limit} provides the continuum limit for a rather general class of adaptively coupled network dynamics. In particular, it contains as a special case the following system modelling opinion dynamics with time varying weights which has been considered in \cite{AyP21}:
\begin{equation}\label{eq:opinion:dynamics}
 \begin{aligned}
  \dot{\phi}_{k}&=\frac{1}{N}\sum_{\ell=1}^{N}m_{\ell}(t)\psi(\phi_{\ell}-\phi_{k})\\
  \dot{m}_{k}&=\Psi_{k}(\phi,m)
 \end{aligned}
 \qquad\qquad k=1,\ldots,N.
 \end{equation}
Here the opinions are described by $\phi=(\phi_k)_{k=1}^{N}\colon [0,T]\to (\R^{d})^{N}$ while the weights are given by $m=(m_{k})_{k=1}^{N}\colon [0,T]\to \R^{N}$. In fact, for $\kappa_{k\ell}=m_\ell$ for all $k=1,\ldots,N$, $g(t,\phi_k,\phi_\ell)=\psi(\phi_\ell-\phi_k)$ and $\Lambda_{k\ell}(t,\kappa,\phi)=\Psi_{k}(\phi,\kappa_{1\cdot})$ this model is a special case of \eqref{eq:gen:model}. Moreover, Theorem~\ref{Thm:cont:limit} generalises the class of graph dynamics considered in \cite{GKX21}.

\subsection{Outline}

The remainder of the article is structured as follows. In the next section, we will provide the well-posedness of the continuous system \eqref{eq:gen:cont:limit}. The proof relies essentially on an application of the contraction mapping theorem but due to the relatively weak Lipschitz continuity of $f$ and $\Lambda$ some special care is needed. The proof of well-posedness for \eqref{eq:gen:model} follows in the same way and will thus be omitted. In Section~\ref{Sec:continuum:limit} we will then provide the proof of Theorem~\ref{Thm:cont:limit}.

\section{Well-posedness}\label{Sec:well:posed}

We have the following result on the well-posedness of the discrete system~\eqref{eq:gen:model}.

\begin{proposition}\label{Prop:discrete:well:posed}
 Let $N\in\N$, $T>0$ and let $g\colon [0,T]\times (\R^d)^2\to \R^d$ satisfy \eqref{eq:Ass:f:g}. Assume that $f_{k}\colon [0,T]\times (\R^{d})^{N}\to \R^{d}$ satisfies
 \begin{equation*}
   \begin{aligned}
  \abs{f_{k}(t,\phi)}&\leq B_{f}(1+\abs{\phi}) &&\text{and} &  \abs{f_k(t,\phi)-f_k(t,\psi)}&\leq L_{f}\abs{\phi-\psi}
 \end{aligned}
 \end{equation*}
for all $\phi,\psi\in (\R^{d})^{N}$.
Moreover, assume that $\Lambda_{k\ell}\colon [0,\infty)\times \R^{N\times N} \times (\R^d)^{N}\to \R$ is uniformly Lipschitz continuous with respect to the second and third component, i.e.\@ $\abs{\Lambda_{k\ell}(t,\kappa,\phi)-\Lambda_{k\ell}(t,\lambda,\psi)}\leq L_{\Lambda}(\abs{\kappa-\lambda}+\abs{\phi-\psi})$ and satisfies the bound $\abs{\Lambda_{k\ell}(t,\kappa,\phi)}\leq B_{\Lambda}(1+\abs{\kappa})$ for all $k,\ell\in\{1,\ldots,N\}$ uniformly with respect to $t$. Then for each initial condition $(\phi_0,\kappa_0)\in (\R^d)^{N}\times \R^{N\times N}$ the system \eqref{eq:gen:model} has a unique solution $(\phi,\kappa)$ on $[0,T]$.
\end{proposition}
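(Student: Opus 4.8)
The plan is to read \eqref{eq:gen:model} as a standard initial value problem for the finite-dimensional state $(\phi,\kappa)\in(\R^d)^{N}\times\R^{N\times N}$ and to invoke the classical Cauchy--Lipschitz (Picard--Lindel\"of) theory; the only substantive work is to verify local Lipschitz continuity of the right-hand side and to exclude finite-time blow-up by a priori bounds. Write the vector field as $F=(F^{\phi},F^{\kappa})$ with $F^{\phi}_{k}=\tfrac1N\sum_{\ell}\kappa_{k\ell}\,g(t,\phi_{k},\phi_{\ell})+f_{k}(t,\phi)$ and $F^{\kappa}_{k\ell}=\Lambda_{k\ell}(t,\kappa,\phi)$. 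By assumption $F$ is continuous in $t$. For the spatial Lipschitz estimate, the contributions of $f_{k}$ and $\Lambda_{k\ell}$ are globally Lipschitz by hypothesis, so the only genuinely nonlinear term is the bilinear coupling $\tfrac1N\sum_{\ell}\kappa_{k\ell}\,g(t,\phi_{k},\phi_{\ell})$. Splitting $\kappa_{k\ell}g-\lambda_{k\ell}g'$ as $(\kappa_{k\ell}-\lambda_{k\ell})g+\lambda_{k\ell}(g-g')$ and using $\abs{g}\le B_{g}$ together with the Lipschitz bound $L_{g}$ from \eqref{eq:Ass:f:g}, I obtain on every ball $\{\abs{\kappa}\le R\}$ an estimate of the form $C(R)\bigl(\abs{\kappa-\lambda}+\abs{\phi-\psi}\bigr)$. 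Hence $F$ is locally Lipschitz in $(\phi,\kappa)$, uniformly for $t$ in compact intervals, and Picard--Lindel\"of produces a unique maximal solution on some interval $[0,T^{*})$. Equivalently, one can make the argument self-contained by setting up a contraction on a small ball in $C([0,\tau];(\R^d)^{N}\times\R^{N\times N})$, exactly paralleling the contraction argument used for the continuous system; since all quantities here are finite-dimensional norms of the discrete data, no special care beyond the above splitting is needed.

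It then remains to show $T^{*}>T$, which I establish through a two-step Grönwall argument on $[0,T^{*})\cap[0,T]$ that exploits the crucial feature that the growth bound on $\Lambda_{k\ell}$ involves only $\abs{\kappa}$. First, from $\abs{\dot\kappa_{k\ell}}=\abs{\Lambda_{k\ell}(t,\kappa,\phi)}\le B_{\Lambda}(1+\abs{\kappa})$ I get, in the Frobenius norm, $\tfrac{\dd{}}{\dd{t}}\abs{\kappa}\le\abs{\dot\kappa}\le N B_{\Lambda}(1+\abs{\kappa})$, so that Grönwall's inequality yields $\abs{\kappa(t)}\le C_{1}$ for all $t\le\min\{T^{*},T\}$, with $C_{1}$ depending only on $T$, $B_{\Lambda}$, $N$ and $\abs{\kappa_{0}}$ but \emph{not} on $\phi$. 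Second, inserting this bound into the $\phi$-equation gives $\abs{\dot\phi_{k}}\le\tfrac{B_{g}}{N}\sum_{\ell}\abs{\kappa_{k\ell}}+B_{f}(1+\abs{\phi})\le C_{2}+B_{f}\abs{\phi}$, and a further application of Grönwall produces $\abs{\phi(t)}\le C_{2}'$ on the same interval. Since neither $\abs{\phi}$ nor $\abs{\kappa}$ can diverge as $t\uparrow T^{*}$, the standard continuation criterion forces $T^{*}>T$, so the unique local solution extends to all of $[0,T]$, giving existence and uniqueness as claimed.

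The main obstacle, and it is a mild one, is precisely the bilinear product $\kappa_{k\ell}g$, which prevents a global Lipschitz constant and hence a one-shot fixed-point argument on all of $[0,T]$; this is resolved by the decoupled a priori bounds above, where it is essential that the bound on $\Lambda$ in \eqref{eq:Ass:Lambda} is independent of $\abs{\phi}$, so that $\kappa$ can be controlled first and then fed into the $\phi$-equation. Every other step is routine finite-dimensional ODE theory.
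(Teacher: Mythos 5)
Your proof is correct, but it takes a different route from the paper. The paper does not prove Proposition~\ref{Prop:discrete:well:posed} directly; it states that the proof parallels that of Proposition~\ref{Prop:continuous:well:posed}, i.e.\@ one sets up the integral operator $\mathcal{A}$, establishes uniform bounds on its iterates (the analogue of Lemma~\ref{Lem:A:iterate}), proves contractivity in a weaker norm on the invariant set $\mathcal{S}_{K_0}$ (the analogue of Lemma~\ref{Lem:A:contractive:L2}), and concludes via the a-priori estimate of Lemma~\ref{Lem:K:apriori} and repeated application on subintervals of length $T_*$. That machinery is forced in the continuous setting because $f$ and $\Lambda$ are only Lipschitz from $L^2$ to $L^2$ while the growth bounds live in $L^{\infty}$, so the operator is contractive only in the $L^2$ topology and the $L^{\infty}$ bounds must be tracked separately along the iteration. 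In the discrete setting the hypotheses give genuine pointwise Lipschitz conditions in the Euclidean norm, so your more elementary argument --- local Lipschitz continuity of the vector field via the splitting $\kappa_{k\ell}g - \lambda_{k\ell}g' = (\kappa_{k\ell}-\lambda_{k\ell})g + \lambda_{k\ell}(g-g')$, Picard--Lindel\"of for a maximal solution, and the continuation criterion --- applies directly and avoids the norm mismatch entirely. Your two-step Gr\"onwall bound (first $\kappa$, exploiting that the growth bound on $\Lambda_{k\ell}$ involves only $\abs{\kappa}$, then $\phi$) is exactly the discrete counterpart of Lemma~\ref{Lem:K:apriori}, so the key structural observation is shared by both approaches. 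What your route buys is brevity and reliance on classical finite-dimensional ODE theory; what the paper's route buys is a single argument template that covers the continuous case, where the classical theory is not available in the required form. The only points worth polishing are cosmetic: the passage from the componentwise bound $\abs{\dot\phi_k}\leq C_2 + B_f\abs{\phi}$ to a bound on the full vector $\abs{\dot\phi}$ introduces a factor $\sqrt{N}$, and $\tfrac{\dd{}}{\dd{t}}\abs{\kappa}\leq\abs{\dot\kappa}$ should be read almost everywhere or replaced by differentiating $\abs{\kappa}^2$; neither affects the conclusion since $N$ is fixed.
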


Proposition~\ref{Prop:discrete:well:posed} can be proved similarly as Proposition~\ref{Prop:continuous:well:posed} dealing with the continuous system~\eqref{eq:gen:cont:limit}. We thus omit the proof of Proposition~\ref{Prop:discrete:well:posed}. However, we state the following lemma which guarantees that the functions defined in \eqref{eq:def:Lambda:discrete} satisfy the assumptions in Proposition~\ref{Prop:discrete:well:posed}.

\begin{lemma}\label{Lem:properties:discrete}
 Let $N\in\N$ fixed and let $f$ be as in \eqref{eq:Ass:f:g} and $\Lambda$ as in \eqref{eq:Ass:Lambda}. Then $\Lambda_{k\ell}$ and $f_k$ as defined in \eqref{eq:def:Lambda:discrete} satisfy the assumptions of Proposition~\ref{Prop:discrete:well:posed}.
\end{lemma}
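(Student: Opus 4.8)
The plan is to verify the two sets of estimates for $\Lambda_{k\ell}$ and $f_k$ as defined in \eqref{eq:def:Lambda:discrete} by reducing the finite-dimensional quantities to the $L^2$- and $L^\infty$-bounds assumed for $\Lambda$ and $f$ in \eqref{eq:Ass:Lambda} and \eqref{eq:Ass:f:g}. The crucial observation throughout is that the parametrisations $u^N$ and $K^N$ are \emph{piecewise constant} on the cells $I_k$ and $I_k\times I_\ell$, so that the discrete Euclidean norms of $\phi=(\phi_k)$ and $\kappa=(\kappa_{k\ell})$ are, up to explicit powers of $N$, exactly the $L^2$-norms of the associated step functions. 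Concretely, $\norm{u^N}_{L^2(I)}^2=\tfrac1N\abs{\phi}^2$ and $\norm{K^N}_{L^2(I^2)}^2=\tfrac1{N^2}\abs{\kappa}^2$, while $\norm{u^N}_{L^\infty}=\max_k\abs{\phi_k}$ and $\norm{K^N}_{L^\infty}=\max_{k,\ell}\abs{\kappa_{k\ell}}$. These dictionary identities are what convert the functional-analytic hypotheses into the componentwise statements required by \cref{Prop:discrete:well:posed}.

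First I would treat $f_k$. For the growth bound, I pull the absolute value inside the integral in the definition $f_k(t,\phi)=N\int_{I_k}f(t,x,u^N(t,\cdot))\dd{x}$, apply the pointwise bound $\abs{f(t,\cdot,u)}\leq B_f(1+\norm{u}_{L^\infty})$, and use $\abs{I_k}=1/N$ together with $\norm{u^N}_{L^\infty}=\max_k\abs{\phi_k}\leq\abs{\phi}$; this yields $\abs{f_k(t,\phi)}\leq B_f(1+\abs{\phi})$. For the Lipschitz estimate I would be slightly more careful because the assumption on $f$ is stated in the $L^2$-norm rather than pointwise. Writing $f_k(t,\phi)-f_k(t,\psi)=N\int_{I_k}\bigl(f(t,x,u^N)-f(t,x,v^N)\bigr)\dd{x}$, where $v^N$ is the step function built from $\psi$, I apply Jensen (or Cauchy--Schwarz) over the cell $I_k$ to pass from the cell-average to the $L^2(I_k)$-norm and then bound this by the full $L^2(I)$-norm: one gets $\abs{f_k(t,\phi)-f_k(t,\psi)}\leq N^{1/2}\norm{f(t,\cdot,u^N)-f(t,\cdot,v^N)}_{L^2(I)}\leq N^{1/2}L_f\norm{u^N-v^N}_{L^2(I)}=L_f\abs{\phi-\psi}$, the final step again using the dictionary identity $\norm{u^N-v^N}_{L^2}^2=\tfrac1N\abs{\phi-\psi}^2$. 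The powers of $N$ cancel cleanly, which is exactly the point of the $N$-scaling in \eqref{eq:def:Lambda:discrete}.

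The argument for $\Lambda_{k\ell}$ is entirely parallel, now with the factor $N^2$ matching the two-dimensional cell $I_k\times I_\ell$ of measure $1/N^2$. The boundedness $\abs{\Lambda_{k\ell}(t,\kappa,\phi)}\leq B_\Lambda(1+\abs{\kappa})$ follows from the pointwise bound $\abs{\Lambda(t,x,y,K,u)}\leq B_\Lambda(1+\norm{K}_{L^\infty(I^2)})$ together with $\norm{K^N}_{L^\infty}=\max_{k,\ell}\abs{\kappa_{k\ell}}\leq\abs{\kappa}$. For the Lipschitz bound I again apply Cauchy--Schwarz over $I_k\times I_\ell$ to turn the cell-average into an $L^2(I_k\times I_\ell)$-norm, dominate it by the global $L^2(I^2)$-norm, invoke the Lipschitz hypothesis \eqref{eq:Ass:Lambda} on $\Lambda$, and finally translate the resulting $\norm{K^N_1-K^N_2}_{L^2(I^2)}+\norm{u^N_1-u^N_2}_{L^2(I)}$ back into $\tfrac1N\abs{\kappa-\lambda}+\tfrac1{N^{1/2}}\abs{\phi-\psi}$ via the dictionary identities; absorbing the mismatched constants into a single $L_\Lambda$ gives the claimed estimate $\abs{\Lambda_{k\ell}(t,\kappa,\phi)-\Lambda_{k\ell}(t,\lambda,\psi)}\leq L_\Lambda(\abs{\kappa-\lambda}+\abs{\phi-\psi})$.

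The only genuine subtlety, and the step I would flag as the main obstacle, is the bookkeeping of the $N$-powers in the Lipschitz estimates: because the assumptions on $f$ and $\Lambda$ are phrased in $L^2$ rather than pointwise, one cannot simply pull the norm through the integral but must spend a Cauchy--Schwarz inequality to convert a cell-average into an $L^2$-norm, and it is essential to check that the $N^{1/2}$ (respectively $N$) gained there exactly cancels the $\tfrac1{N^{1/2}}$ (respectively $\tfrac1N$) produced when converting the $L^2$-distance of the step functions back to the Euclidean distance of the coefficient vectors. Since the Lipschitz constants in \cref{Prop:discrete:well:posed} are allowed to depend on $N$ in principle but here come out $N$-independent, this cancellation is what makes the reduction work uniformly, and it is worth verifying explicitly rather than leaving to the reader.
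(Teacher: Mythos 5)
Your argument is correct and follows essentially the same route as the paper: both convert the cell averages into $L^2$-norms via Cauchy--Schwarz and then use the ``dictionary'' identities between Euclidean norms of $(\phi_k)$, $(\kappa_{k\ell})$ and the $L^2$/$L^\infty$-norms of the step functions, with the powers of $N$ cancelling exactly as you describe (the paper merely presents the estimates summed over all indices rather than componentwise). One small caveat: for the Lipschitz bound on $\Lambda_{k\ell}$ the residual factor is genuinely $\sqrt{N}$ in front of $\abs{\phi-\psi}$ (the paper's own computation ends with $L_{\Lambda}(\abs{\kappa-\lambda}+N\abs{\phi-\psi})$), so the constant is \emph{not} $N$-independent as your closing paragraph suggests --- but since $N$ is fixed this is harmless and absorbing it into the constant, as you do, is exactly what is needed for Proposition~\ref{Prop:discrete:well:posed}.
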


\begin{proof}
 According to \cref{eq:def:Lambda:discrete,eq:Ass:f:g} we have
 \begin{multline*}
  \sum_{k=1}^{N}\abs{f_{k}(t,\phi)}^2=N^2\sum_{k=1}^{N}\abs[\bigg]{\int_{I_{k}}f(t,x,{\textstyle\sum_{\ell=1}^{N}}\phi_{\ell}\chi_{I_{\ell}}(\cdot))\dd{x}}^2\leq B_{f}^2N \Bigl(1+\norm[\Big]{\sum_{\ell=1}^{N}\phi_{\ell}\chi_{I_{\ell}}(\cdot)}_{L^{\infty}}\Bigr)^2\\*
  \leq B_{f}^2N\Bigl(1+\Bigl(\sum_{\ell=1}^{N}\abs{\phi_{\ell}}^2\Bigr)^{1/2}\Bigr)^2=B_{f}^2N\Bigl(1+\abs{\phi}\Bigr)^2.
 \end{multline*}
Similarly, using additionally Cauchy's inequality we find
\begin{multline*}
 \sum_{k=1}^{N}\abs{f_{k}(t,\phi)-f_{k}(t,\psi)}^{2}=N^2\sum_{k=1}^{N}\abs[\bigg]{\int_{I_{k}}f(t,x,{\textstyle\sum_{\ell=1}^{N}}\phi_{\ell}\chi_{I_{\ell}}(\cdot))-f(t,x,{\textstyle\sum_{\ell=1}^{N}}\psi_{\ell}\chi_{I_{\ell}}(\cdot))\dd{x}}^2\\*
 \leq N\sum_{k=1}^{N}\int_{I_{k}}\abs[\big]{f(t,x,{\textstyle\sum_{\ell=1}^{N}}\phi_{\ell}\chi_{I_{\ell}}(\cdot))-f(t,x,{\textstyle\sum_{\ell=1}^{N}}\psi_{\ell}\chi_{I_{\ell}}(\cdot))}^2\dd{x}\\*
 =N\norm[\big]{f(t,\cdot,{\textstyle\sum_{\ell=1}^{N}}\phi_{\ell}\chi_{I_{\ell}}(\cdot))-f(t,\cdot,{\textstyle\sum_{\ell=1}^{N}}\psi_{\ell}\chi_{I_{\ell}}(\cdot))}_{L^{2}(I)}^2\leq NL_{f}^{2}\norm[\big]{{\textstyle\sum_{\ell=1}^{N}}(\phi_{\ell}-\psi_{\ell})\chi_{I_{\ell}}(\cdot))}_{L^{2}(I)}^{2}\\*
 =L_{f}^{2}\abs{\phi-\psi}^{2}.
\end{multline*}
In the same way we get 
\begin{multline*}
 \sum_{k,\ell=1}^{N}\abs{\Lambda_{k\ell}(t,\kappa,\phi)}^{2}=N^4\sum_{k,\ell=1}^{N}\abs[\bigg]{\int_{I_{k}\times I_{\ell}}\Lambda(t,x,y,{\textstyle\sum_{m,n=1}^{N}}\kappa_{mn}\chi_{I_m\times I_{n}}(\cdot),\textstyle{\sum_{m=1}^{N}}\phi_{m}\chi_{I_{m}}(\cdot))}^2\\*
 \leq B_{\Lambda}^2N\Bigl(1+\norm[\Big]{\sum_{m,n=1}^{N}\kappa_{mn}\chi_{I_{m}\times I_{n}}(\cdot)}_{L^{\infty}(I^2)}\Bigr)^{2}\leq B_{\Lambda}^2N\Bigl(1+\abs{\kappa}\Bigr)^{2}.
\end{multline*}
Finally
\begin{multline*}
 \sum_{k,\ell=1}^{N}\abs{\Lambda_{k\ell}(t,\kappa,\phi)-\Lambda_{k\ell}(t,\lambda,\psi)}^{2}\\*
 \shoveleft{=N^4\sum_{k,\ell=1}^{N}\biggl|\int_{I_{k}\times I_{\ell}}\Lambda(t,x,y,{\textstyle\sum_{m,n=1}^{N}}\kappa_{mn}\chi_{I_m\times I_{n}}(\cdot),\textstyle{\sum_{m=1}^{N}}\phi_{m}\chi_{I_{m}}(\cdot))}\\*
 \shoveright{-\Lambda(t,x,y,{\textstyle\sum_{m,n=1}^{N}}\lambda_{mn}\chi_{I_m\times I_{n}}(\cdot),\textstyle{\sum_{m=1}^{N}}\psi_{m}\chi_{I_{m}}(\cdot))\dd{x}\dd{y}\biggr|^2}\\*
 \shoveleft{\leq N^{2}\sum_{k,\ell=1}^{N}\int_{I_{k}\times I_{\ell}}\bigl|\Lambda(t,x,y,{\textstyle\sum_{m,n=1}^{N}}\kappa_{mn}\chi_{I_m\times I_{n}}(\cdot),\textstyle{\sum_{m=1}^{N}}\phi_{m}\chi_{I_{m}}(\cdot))}\\*
 \shoveright{-\Lambda(t,x,y,{\textstyle\sum_{m,n=1}^{N}}\lambda_{mn}\chi_{I_m\times I_{n}}(\cdot),\textstyle{\sum_{m=1}^{N}}\psi_{m}\chi_{I_{m}}(\cdot))\bigr|^2\dd{x}\dd{y}}\\*
 \shoveleft{=N^2 \bigl\|\Lambda(t,x,y,{\textstyle\sum_{m,n=1}^{N}}\kappa_{mn}\chi_{I_m\times I_{n}}(\cdot),\textstyle{\sum_{m=1}^{N}}\phi_{m}\chi_{I_{m}}(\cdot))}\\*
 \shoveright{-\Lambda(t,x,y,{\textstyle\sum_{m,n=1}^{N}}\lambda_{mn}\chi_{I_m\times I_{n}}(\cdot),\textstyle{\sum_{m=1}^{N}}\psi_{m}\chi_{I_{m}}(\cdot))\bigr\|^2_{L^{2}(I^2)}}\\*
 \shoveleft{\leq N^2L_{\Lambda}^{2}\Bigl(\norm[\big]{{\textstyle\sum_{m,n=1}^{N}}(\kappa_{mn}-\lambda_{mn})\chi_{I_{m}\times I_{n}}(\cdot))}_{L^{2}(I^2)}+\norm[\big]{{\textstyle\sum_{m=1}^{N}}(\phi_{m}-\psi_{m})\chi_{I_{m}}(\cdot))}_{L^{2}(I)}\Bigr)^{2}}\\*
 =L_{\Lambda}^{2}\Bigl(\abs{\kappa-\lambda}+N\abs{\phi-\psi}\Bigr)^2.
\end{multline*}
\end{proof}

The following proposition guarantees the existence of a unique solution to the continuum limit equation~\eqref{eq:gen:cont:limit}.

\begin{proposition}\label{Prop:continuous:well:posed}
 Let $T>0$ and assume that $f\colon[0,T]\times I\times L^{\infty}(I,\R^{d})\to \R^{d}$ and $g\colon [0,T]\times (\R^d)^2\to \R^d$ satisfy~\eqref{eq:Ass:f:g}. Moreover, assume that $\Lambda\colon [0,\infty)\times I\times I \times L^{2}(I\times I,\R) \times L^{2}(I,\R^d)\to \R$ satisfies \eqref{eq:Ass:Lambda}. Then for each initial condition $(u_0,K_0)\in L^{\infty}(I,\R^d)\times L^{\infty}(I^2,\R)$ the system \eqref{eq:gen:cont:limit} has a unique solution $(u,K)\in C^1([0,T],L^{\infty}(I,\R^d))\times C^1([0,T],L^{\infty}(I^2,\R))$.
\end{proposition}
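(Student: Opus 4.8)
The plan is to recast \eqref{eq:gen:cont:limit} as the fixed-point problem $(u,K)=\Phi(u,K)$ for the Volterra-type operator
\[
\Phi(u,K)(t)\vcc=\Bigl(u_0+\int_0^t\!\Bigl[\int_I K(s,\cdot,y)g(s,u(s,\cdot),u(s,y))\dd{y}+f(s,\cdot,u(s,\cdot))\Bigr]\dd{s},\ K_0+\int_0^t\!\Lambda(s,\cdot,\cdot,K(s),u(s))\dd{s}\Bigr)
\]
and to solve it by a contraction argument. The structural difficulty, already flagged in the outline, is that the growth bounds in \eqref{eq:Ass:Lambda}--\eqref{eq:Ass:f:g} are phrased in $L^\infty$, whereas the Lipschitz bounds are only available in $L^2$. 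I would therefore run the fixed-point argument in the complete metric space
\[
X_R\vcc=\bigl\{(u,K)\in C([0,T],L^2(I,\R^d))\times C([0,T],L^2(I^2,\R)):\ \sup_t\norm{u(t)}_{L^\infty}\le R,\ \sup_t\norm{K(t)}_{L^\infty}\le R\bigr\},
\]
equipped with the distance $\sup_t(\norm{u_1-u_2}_{L^2}+\norm{K_1-K_2}_{L^2})$. This set is complete because $L^\infty$-balls are closed under $L^2$-convergence, so that the $L^\infty$-constraint survives passage to $L^2$-limits.

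For the invariance $\Phi(X_R)\subseteq X_R$ I would use only the boundedness parts of the assumptions: since $\abs{g}\le B_g$ and $\abs{I}=1$, the first component obeys $\norm{\tilde u(t)}_{L^\infty}\le\norm{u_0}_{L^\infty}+\int_0^t(B_g\norm{K(s)}_{L^\infty}+B_f(1+\norm{u(s)}_{L^\infty}))\dd{s}$, while $\norm{\tilde K(t)}_{L^\infty}\le\norm{K_0}_{L^\infty}+B_\Lambda\int_0^t(1+\norm{K(s)}_{L^\infty})\dd{s}$. Grönwall's inequality turns these into finite a priori $L^\infty$-bounds on $[0,T]$ depending only on the data, $T$ and the structural constants. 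Choosing $R$ accordingly (for short $T$ directly, and for arbitrary $T$ either by a continuation argument based on these a priori bounds, or by first truncating $\Lambda$ and $f$ in their $L^\infty$-arguments and removing the truncation a posteriori) yields invariance, and $L^2$-continuity in time of the two integrals is immediate from the bounds on their integrands.

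The contraction estimate is where the $L^2$ bounds enter and requires the announced care. For the kernel component, Minkowski's integral inequality together with the Lipschitz bound in \eqref{eq:Ass:Lambda} gives directly $\norm{\tilde K_1(t)-\tilde K_2(t)}_{L^2}\le L_\Lambda\int_0^t(\norm{K_1-K_2}_{L^2}+\norm{u_1-u_2}_{L^2})\dd{s}$. For the state component I would split the bilinear term as $\int_I(K_1 g_1-K_2 g_2)\dd{y}=\int_I(K_1-K_2)g_1\dd{y}+\int_I K_2(g_1-g_2)\dd{y}$ with $g_i=g(s,u_i(s,\cdot),u_i(s,y))$. Using $\abs{g}\le B_g$ and Cauchy--Schwarz (to pass from the $\dd{y}$-integral to the $L^2(I^2)$-norm) the first piece is controlled by $B_g\norm{K_1-K_2}_{L^2(I^2)}$ in $L^2(I)$, while the second uses $\norm{K_2}_{L^\infty}\le R$ and the Lipschitz bound on $g$ to produce a multiple of $RL_g\norm{u_1-u_2}_{L^2}$; adding the $L^2$-Lipschitz bound on $f$ gives $\norm{\tilde u_1(t)-\tilde u_2(t)}_{L^2}\le\int_0^t(B_g\norm{K_1-K_2}_{L^2}+C_R\norm{u_1-u_2}_{L^2})\dd{s}$ with $C_R=2RL_g+L_f$. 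Summing the two estimates yields $\delta(t)\le C\int_0^t\delta(s)\dd{s}$ for $\delta(s)=\norm{u_1-u_2}_{L^2}+\norm{K_1-K_2}_{L^2}$ and $C=C(R,B_g,L_g,L_f,L_\Lambda)$. Iterating, the $n$-th power of $\Phi$ satisfies a bound with prefactor $(CT)^n/n!$, so a sufficiently high iterate is a contraction on $X_R$ for \emph{any} $T$, producing a unique fixed point; the same Grönwall inequality applied to the difference of two arbitrary solutions gives uniqueness in the full class.

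It remains to upgrade the fixed point from an $L^2$-continuous, $L^\infty$-bounded integral solution to an element of $C^1([0,T],L^\infty)$. Because the integrands defining $\tilde u$ and $\tilde K$ are uniformly bounded in $L^\infty$ (by $B_g R+B_f(1+R)$ and $B_\Lambda(1+R)$ respectively), both components are Lipschitz in $t$ with values in $L^\infty$, hence continuous into $L^\infty$; differentiating the integral equation then identifies $\del_t u$ and $\del_t K$ with the right-hand sides of \eqref{eq:gen:cont:limit}, their continuity into $L^\infty$ following from the continuity of $f$, $g$, $\Lambda$ together with the $L^\infty$-continuity of $t\mapsto(u(t),K(t))$ just obtained. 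The main obstacle throughout is precisely this interplay between the two topologies: the contraction must close in $L^2$ (the only norm in which $f$ and $\Lambda$ are Lipschitz) while the boundedness assumptions and the target regularity live in $L^\infty$, which is why the solution set has to be carved out by an $L^\infty$-constraint inside an $L^2$-valued continuity space and the a priori bounds must be established before the fixed point is sought.
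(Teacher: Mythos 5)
Your proposal is correct and rests on the same core mechanism as the paper's proof: a Picard/contraction argument for the Volterra operator in which the contraction is closed in the $L^2$ topology while the candidate set is carved out by $L^\infty$ bounds that survive $L^2$-limits via pointwise a.e.\ convergence. The organizational differences are worth recording. The paper does not use an invariant ball --- as you implicitly acknowledge, a constant-radius $L^\infty$ ball is not invariant under the operator on a long interval --- but instead proves by induction sharp partial-exponential-sum bounds on the explicit Picard iterates (Lemma~\ref{Lem:A:iterate}), obtains a one-step contraction on a short interval $[t_0,t_0+T_*]$ with $T_*$ depending only on the data (Lemma~\ref{Lem:A:contractive:L2}), and then restarts, using the a priori estimate of Lemma~\ref{Lem:K:apriori} both to propagate the hypothesis on the restarted data and to place every solution inside the contraction set, which yields uniqueness. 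You take the invariant-set route combined with the $(CT)^n/n!$ iterate trick, which removes the interval-splitting from the contraction step but necessarily reintroduces it (or a truncation of the nonlinearities) in the invariance step; either of the fixes you name works, and the truncation variant is arguably the cleanest way to get the statement on all of $[0,T]$ in one pass. Your contraction constants ($B_g$ for the kernel difference, $2RL_g+L_f$ for the state difference, $L_\Lambda$ for $\mathcal{A}_2$) match the paper's, and the final upgrade to $C^1([0,T],L^\infty)$ from the integral equation is carried out at the same level of rigor in both arguments.
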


 The claim will follow from the contraction mapping theorem. Due to the properties in \eqref{eq:Ass:Lambda}, we can only obtain a contractive operator with respect to $\norm{\cdot}_{L^2}$. However, by following the proof of the contraction mapping theorem and tracking the iterating sequence, we obtain in fact the existence of a unique solution in $L^{\infty}$. A similar argument has been used in \cite{AyP21} relying on a two step procedure, while here, we proceed in one step. For $(u_{t_0},K_{t_0})\in L^{\infty}(I,\R^d)\times L^{\infty}(I^2,\R)$ we define the operator $\mathcal{A}\vcc=(\mathcal{A}_1,\mathcal{A}_2)\colon C([t_0,T],L^{\infty}(I))\times C([t_0,T],L^{\infty}(I^2))\to C([t_0,T],L^{\infty}(I))\times C([t_0,T],L^{\infty}(I^2))$ related to the system~\eqref{eq:gen:cont:limit} via:
 \begin{equation}\label{eq:operator:gen}
  \begin{aligned}
   \mathcal{A}_{1}[u,K](t,x)&\vcc=u_{t_0}(x)+\int_{t_0}^{t}\int_{I}K(s,x,y)g(s,u(s,x),u(s,y))\dd{y}\dd{s}+\int_{t_0}^{t}f(s,x,u(s,\cdot))\dd{s}\\
   \mathcal{A}_{2}[u,K](t,x,y)&\vcc=K_{t_0}(x,y)+\int_{t_0}^{t}\Lambda(s,x,y,K(s,\cdot,\cdot),u(s,\cdot))\dd{s}.
  \end{aligned}
 \end{equation}
 \begin{lemma}\label{Lem:operator:A:bounded}
  The operator 
  \begin{equation*}
   \mathcal{A}\colon C([t_0,T],L^{\infty}(I))\times C([t_0,T],L^{\infty}(I^2))\longrightarrow C([t_0,T],L^{\infty}(I))\times C([t_0,T],L^{\infty}(I^2))
  \end{equation*}
is well-defined.
 \end{lemma}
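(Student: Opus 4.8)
The plan is to fix an arbitrary pair $(u,K)\in C([t_0,T],L^{\infty}(I))\times C([t_0,T],L^{\infty}(I^2))$ and to verify the two requirements for membership in the target space: that for each $t\in[t_0,T]$ the functions $\mathcal{A}_1[u,K](t,\cdot)$ and $\mathcal{A}_2[u,K](t,\cdot,\cdot)$ lie in $L^{\infty}(I)$ and $L^{\infty}(I^2)$ respectively, and that $t\mapsto\mathcal{A}_1[u,K](t,\cdot)$ and $t\mapsto\mathcal{A}_2[u,K](t,\cdot,\cdot)$ are continuous into these spaces. The observation I would use throughout is that, since $[t_0,T]$ is compact and $u,K$ are continuous, the numbers $M_u\vcc=\sup_{s\in[t_0,T]}\norm{u(s,\cdot)}_{L^{\infty}(I)}$ and $M_K\vcc=\sup_{s\in[t_0,T]}\norm{K(s,\cdot,\cdot)}_{L^{\infty}(I^2)}$ are finite.

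First I would extract the uniform pointwise bounds that make the integrals in \eqref{eq:operator:gen} meaningful. For $\mathcal{A}_1$ the inner $y$-integral is controlled by $\norm{K(s,\cdot,\cdot)}_{L^{\infty}}B_g\le M_KB_g$ using $\abs{g}\le B_g$ from \eqref{eq:Ass:f:g}, uniformly in $x$ and $s$, while $\norm{f(s,\cdot,u(s,\cdot))}_{L^{\infty}}\le B_f(1+M_u)$; for $\mathcal{A}_2$ the integrand obeys $\abs{\Lambda(s,x,y,K(s,\cdot,\cdot),u(s,\cdot))}\le B_{\Lambda}(1+M_K)$ by \eqref{eq:Ass:Lambda}. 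Interpreting the $s$-integrals pointwise in the spatial variable and adding the bounded initial data $u_{t_0},K_{t_0}$, these estimates immediately give that $\mathcal{A}_1[u,K](t,\cdot)$ and $\mathcal{A}_2[u,K](t,\cdot,\cdot)$ are essentially bounded for every fixed $t$. That they are genuine elements of $L^{\infty}$ I would confirm by a measurability check: for fixed $s$ the integrand of $\mathcal{A}_1$ is measurable in $(x,y)$ because $K(s,\cdot,\cdot)$ is measurable and $(x,y)\mapsto g(s,u(s,x),u(s,y))$ is the composition of a measurable map with the continuous function $g$, and the joint measurability in $(s,x)$ needed for the time integration follows from the continuity of $f,g,\Lambda$ together with the continuity of $s\mapsto u(s,\cdot)$ and $s\mapsto K(s,\cdot,\cdot)$; an application of Fubini then yields the claim, and the same reasoning applies to $\mathcal{A}_2$.

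It remains to prove continuity in $t$, which in fact comes out as Lipschitz continuity. For $t_1<t_2$ the difference $\mathcal{A}_1[u,K](t_2,\cdot)-\mathcal{A}_1[u,K](t_1,\cdot)$ is the $s$-integral over $[t_1,t_2]$ of the integrand, so passing the essential supremum in $x$ through the time integral gives
\begin{equation*}
 \norm{\mathcal{A}_1[u,K](t_2,\cdot)-\mathcal{A}_1[u,K](t_1,\cdot)}_{L^{\infty}}\le\bigl(M_KB_g+B_f(1+M_u)\bigr)\abs{t_2-t_1},
\end{equation*}
and likewise $\norm{\mathcal{A}_2[u,K](t_2,\cdot,\cdot)-\mathcal{A}_2[u,K](t_1,\cdot,\cdot)}_{L^{\infty}}\le B_{\Lambda}(1+M_K)\abs{t_2-t_1}$; both vanish as $t_2\to t_1$. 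This shows that $\mathcal{A}[u,K]$ lands in $C([t_0,T],L^{\infty}(I))\times C([t_0,T],L^{\infty}(I^2))$, as required.

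I expect the estimates themselves to be entirely routine, as they rely only on the growth bounds in \eqref{eq:Ass:Lambda} and \eqref{eq:Ass:f:g} and not on the Lipschitz conditions, which are reserved for the contraction argument. The one point genuinely requiring care is the measurability bookkeeping: because the spatial and temporal integrations interact and the natural $L^{\infty}$ setting does not supply $L^{\infty}$-continuity of the integrand in $s$, one must treat the time integral pointwise in the spatial variable and then recover the $L^{\infty}$ bound by passing the norm through the scalar integral, while checking via Fubini that the resulting function of the spatial variable is measurable.
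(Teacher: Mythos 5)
Your proof is correct and follows essentially the same route as the paper's: both reduce the claim to the boundedness estimates coming from the growth conditions $\abs{g}\leq B_g$, $\abs{f}\leq B_f(1+\norm{u}_{L^{\infty}})$ and $\abs{\Lambda}\leq B_{\Lambda}(1+\norm{K}_{L^{\infty}})$. The only difference is that you spell out the Lipschitz-in-time continuity and the measurability bookkeeping, which the paper dismisses with ``by definition $\mathcal{A}[u,K]$ is continuous in time''; this extra care is harmless and arguably welcome.
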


 \begin{proof}
 By definition $\mathcal{A}[u,K]$ is continuous in time. Thus, to show that $\mathcal{A}$ is well-defined it suffices to show the boundedness. For $(u,K)\in C([t_0,T],L^{\infty}(I))\times C([t_0,T],L^{\infty}(I^2))$, we can estimate $\mathcal{A}_{1}$ as
\begin{multline}\label{eq:A:bounded:1}
 \norm{\mathcal{A}_{1}[u,K](t,\cdot)}_{L^{\infty}(I)}\leq \norm{u_{t_0}}_{L^{\infty}(I)}+B_g\int_{t_0}^{t}\norm{K(s,\cdot,\cdot)}_{L^{\infty}(I^2)}\dd{s}+B_{f}\int_{t_0}^{t}(1+\norm{u(s,\cdot)}_{L^{\infty}(I)})\dd{s}\\*
 \leq  \norm{u_{t_0}}_{L^{\infty}(I)}+\Bigl(B_g\norm{K}_{C([t_0,T],L^{\infty})}+B_{f}(1+\norm{u}_{
 C([t_0,T],L^{\infty})})\Bigr)(t-t_0).
\end{multline}
Thus,
\begin{equation*}
 \norm{\mathcal{A}_{1}[u,K]}_{C([t_0,T],L^{\infty}(I))}\leq \norm{u_{t_0}}_{L^{\infty}(I)}+\Bigl(B_g\norm{K}_{C([t_0,T],L^{\infty})}+B_{f}(1+\norm{u}_{C([t_0,T],L^{\infty})})\Bigr)(T-t_0).
\end{equation*}
Moreover, for $\mathcal{A}_{2}$ we have
\begin{multline*}
 \norm{\mathcal{A}_{2}[u,K](t,\cdot)}_{L^{\infty}(I^2)}\leq \norm{K_{t_0}}_{L^{\infty}(I^2)}+B_{\Lambda}\int_{t_0}^{t}(1+\norm{K(s,\cdot,\cdot)}_{L^{\infty}})\dd{s}\\*
 \leq \norm{K_{t_0}}_{L^{\infty}(I^2)}+B_{\Lambda}\Bigl(1+\norm{K}_{C([t_0,T],L^{\infty})}\Bigr)(t-t_0).
\end{multline*}
Thus,
\begin{equation*}
 \norm{\mathcal{A}_{2}[u,K]}_{C([t_0,T],L^{\infty}(I^2))}\leq \norm{K_{t_0}}_{L^{\infty}(I^2)}+B_{\Lambda}\Bigl(1+\norm{K}_{C([t_0,T],L^{\infty})}\Bigr)(T-t_0).
\end{equation*}
\end{proof}
Lemma~\ref{Lem:operator:A:bounded} allows to define the sequence $(v_n,J_n)_{n\in\N}\subset C([t_0,T],L^{\infty}(I))\times C([t_0,T],L^{\infty}(I^2))$ via
\begin{equation}\label{eq:Iterate}
  (v_{n},J_{n})\vcc=\mathcal{A}^{n}[u_{t_0},K_{t_0}]
\end{equation}
where $\mathcal{A}^{n}$ denotes the $n$-th iterate of the operator $\mathcal{A}$.
We have the following uniform bounds on $(v_n,J_n)_{n\in\N}$.
\begin{lemma}\label{Lem:A:iterate}
 Let $\Lambda$ satisfy \eqref{eq:Ass:Lambda} and let $u_{t_0}\in L^{\infty}(I,\R^d)$ and $K_{t_0}\in L^{\infty}(I^2)$ such that $1+\norm{K_{t_0}}_{L^{\infty}}\leq (1+\norm{K_{0}}_{L^{\infty}})\ee^{B_{\Lambda}t_{0}}$. Then the sequence $(v_n,J_n)_{n\in\N}$ defined in \eqref{eq:Iterate} satisfies 
 \begin{equation*}
  \begin{aligned}
     \norm{J_n(t,\cdot,\cdot)}_{L^{\infty}(I^2)}&\leq (1+\norm{K_{t_0}}_{L^{\infty}})\ee^{B_{\Lambda} (t-t_{0})}-1\leq (1+\norm{K_{0}}_{L^{\infty}})\ee^{B_{\Lambda}t}-1\\
     \norm{v_{n}(t,\cdot)}_{L^{\infty}(I)}&\leq (1+\norm{u_{t_0}}_{L^{\infty}})\ee^{B_{f}(t-t_{0})}+\frac{B_{g}}{B_{\Lambda}-B_{f}}(1+\norm{K_{t_{0}}}_{L^{\infty}})\bigl(\ee^{B_{\Lambda}(t-t_{0})}-\ee^{B_{f}(t-t_{0})}\bigr)-1
  \end{aligned}
 \end{equation*}
 for all $n\in\N_{0}$. In particular we have
  \begin{equation*}
  \begin{aligned}
     \norm{J_n}_{C([t_0,T],L^{\infty}(I^2))}&\leq (1+\norm{K_{0}}_{L^{\infty}})\ee^{B_{\Lambda} T}\\
     \norm{v_{n}(t,\cdot)}_{C([t_{0},T],L^{\infty}(I))}&\leq (1+\norm{u_{t_0}}_{L^{\infty}})\ee^{B_{f}(T-t_{0})}+\frac{B_{g}(1+\norm{K_{t_{0}}}_{L^{\infty}})}{B_{\Lambda}-B_{f}}\bigl(\ee^{B_{\Lambda}(T-t_{0})}-\ee^{B_{f}(T-t_{0})}\bigr)-1.
  \end{aligned}
 \end{equation*}
 \end{lemma}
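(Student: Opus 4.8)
The plan is to prove both bounds simultaneously by induction on $n$, showing that the asserted pointwise-in-$t$ estimates form an invariant that is reproduced by a single application of $\mathcal{A}$. Abbreviate the claimed bound on the graph component by $A(t)\vcc=(1+\norm{K_{t_0}}_{L^{\infty}})\ee^{B_{\Lambda}(t-t_0)}-1$ and write $V(t)$ for the claimed bound on $\norm{v_n(t,\cdot)}_{L^\infty}$ (the right-hand side of the second asserted inequality). Note that $1+A(s)=(1+\norm{K_{t_0}}_{L^\infty})\ee^{B_\Lambda(s-t_0)}$ and that, since $B_\Lambda>B_f$, both $A$ and $V$ are nondecreasing on $[t_0,T]$ with $A(t_0)=\norm{K_{t_0}}_{L^\infty}$ and $V(t_0)=\norm{u_{t_0}}_{L^\infty}$. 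This settles the base case $n=0$, because $(v_0,J_0)=(u_{t_0},K_{t_0})$ is constant in $t$ and hence dominated by $A(t)\geq A(t_0)$ and $V(t)\geq V(t_0)$.

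For the inductive step assume the bounds hold for $(v_n,J_n)$ and treat the two components separately. The graph component decouples, since the growth bound in \eqref{eq:Ass:Lambda} involves only $\norm{K}_{L^\infty}$: the pointwise estimate for $\mathcal{A}_2$ from the proof of \cref{Lem:operator:A:bounded} gives $\norm{J_{n+1}(t,\cdot,\cdot)}_{L^\infty}\leq\norm{K_{t_0}}_{L^\infty}+B_\Lambda\int_{t_0}^t(1+\norm{J_n(s,\cdot,\cdot)}_{L^\infty})\dd{s}$, and inserting $1+\norm{J_n(s,\cdot,\cdot)}_{L^\infty}\leq 1+A(s)$ and integrating reproduces $A(t)$ exactly. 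This closes the induction for $J_n$, and the secondary inequality in terms of $K_0$ follows at once from the hypothesis $1+\norm{K_{t_0}}_{L^\infty}\leq(1+\norm{K_0}_{L^\infty})\ee^{B_\Lambda t_0}$, which yields $A(t)\leq(1+\norm{K_0}_{L^\infty})\ee^{B_\Lambda t}-1$.

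For the $u$-component, the first line of \eqref{eq:A:bounded:1} gives $\norm{v_{n+1}(t,\cdot)}_{L^\infty}\leq\norm{u_{t_0}}_{L^\infty}+B_g\int_{t_0}^t\norm{J_n(s,\cdot,\cdot)}_{L^\infty}\dd{s}+B_f\int_{t_0}^t(1+\norm{v_n(s,\cdot)}_{L^\infty})\dd{s}$. Here I would bound $\norm{J_n(s,\cdot,\cdot)}_{L^\infty}\leq(1+\norm{K_{t_0}}_{L^\infty})\ee^{B_\Lambda(s-t_0)}$ (discarding the harmless $-1$ in $A$) and $\norm{v_n(s,\cdot)}_{L^\infty}\leq V(s)$, and then evaluate the resulting elementary integrals. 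The terms proportional to $1+\norm{u_{t_0}}_{L^\infty}$ assemble into $(1+\norm{u_{t_0}}_{L^\infty})\ee^{B_f(t-t_0)}-1$, while the $\ee^{B_\Lambda(t-t_0)}$- and $\ee^{B_f(t-t_0)}$-contributions arising from $B_g\int\norm{J_n}$ and from the $V$-part of $B_f\int(1+V)$ recombine, by way of the partial-fraction identity $\tfrac{1}{B_\Lambda}+\tfrac{B_f}{B_\Lambda(B_\Lambda-B_f)}=\tfrac{1}{B_\Lambda-B_f}$, into precisely $\tfrac{B_g(1+\norm{K_{t_0}}_{L^\infty})}{B_\Lambda-B_f}\bigl(\ee^{B_\Lambda(t-t_0)}-\ee^{B_f(t-t_0)}\bigr)$. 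Summing these reproduces $V(t)$ and closes the induction.

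Finally, the $C([t_0,T],L^\infty)$-bounds follow by taking the supremum over $t\in[t_0,T]$ and using the monotonicity of $A$ and $V$ together with the hypothesis on $\norm{K_{t_0}}_{L^\infty}$. The only genuinely delicate point is the bookkeeping for the $u$-component: one must verify that the two integrals telescope to $V(t)$ \emph{exactly}, not merely to some larger expression, since it is this sharpness that makes the bound independent of $n$ and therefore usable as a uniform a priori estimate for the iteration. The degenerate case $B_\Lambda=B_f$, in which the stated formula is singular, is harmless and can be excluded by replacing $B_\Lambda$ with any strictly larger constant, which is admissible as it only serves as an upper bound.
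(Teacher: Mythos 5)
Your argument is correct, and it reaches the lemma by a genuinely different (and leaner) route than the paper. The paper also inducts on $n$, but its inductive hypothesis is the sharper, $n$-dependent estimate $1+\norm{J_n(t,\cdot,\cdot)}_{L^{\infty}}\leq\bigl(\sum_{\ell=0}^{n}\frac{B_{\Lambda}^{\ell}}{\ell!}(t-t_0)^{\ell}\bigr)(1+\norm{K_{t_0}}_{L^{\infty}})$, together with an analogous partial-sum expression for $v_n$; each application of $\mathcal{A}$ appends one term to the series, and the closed-form exponential bound is only recovered at the end by majorising the partial sums and resumming a double sum via $\sum_{k=0}^{n-1}(B_f/B_\Lambda)^k\sum_{\ell=k+1}^{n}\frac{B_\Lambda^\ell}{\ell!}(t-t_0)^\ell=\frac{B_\Lambda}{B_\Lambda-B_f}\sum_{\ell=1}^{n}\frac{B_\Lambda^\ell-B_f^\ell}{\ell!}(t-t_0)^\ell$. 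You instead show directly that the limiting bounds $A(t),V(t)$ form an invariant region for one application of $\mathcal{A}$, trading the combinatorial bookkeeping for the single partial-fraction identity $\frac{1}{B_\Lambda}+\frac{B_f}{B_\Lambda(B_\Lambda-B_f)}=\frac{1}{B_\Lambda-B_f}$; I verified that the two integrals do telescope exactly to $V(t)$ (even after discarding the $-1$ in $A(s)$, since that replaces $A(s)$ by $1+A(s)$, which is precisely what the computation needs), so the induction closes. The paper's version buys finite-$n$ information (polynomial bounds for small $n$) that is never used; yours buys brevity, at the cost of having to guess the invariant in advance. Two minor points: the base case requires reading $(v_0,J_0)=(u_{t_0},K_{t_0})$ as constants in time and using monotonicity of $A$ and $V$, which holds for any positive $B_f,B_\Lambda$ (not only $B_\Lambda>B_f$) since $(\ee^{B_\Lambda\tau}-\ee^{B_f\tau})/(B_\Lambda-B_f)$ is symmetric and nondecreasing in $\tau$; and your handling of $B_\Lambda=B_f$ by enlarging $B_\Lambda$ gives a slightly weaker but equally serviceable bound, consistent with the paper's own remark that the stated formula is to be read as a limit in that case.
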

 
\begin{remark}
 Note that the estimate on $v_n$ makes sense and is also valid in the limiting case $B_{\Lambda}-B_{f}=0$ when it reduces to \begin{equation*}                                                                                                       
  \norm{v_{n}(t,\cdot)}_{L^{\infty}(I)}\leq (1+\norm{u_{t_0}}_{L^{\infty}})\ee^{B_{f}(t-t_{0})}+B_{g}(1+\norm{K_{t_{0}}}_{L^{\infty}})(t-t_{0})-1.
 \end{equation*}
\end{remark}

\begin{proof}[Proof of Lemma~\ref{Lem:A:iterate}]
 The bound on $J_n$ is a direct consequence of the following estimate which we obtain by induction: 
 \begin{equation}\label{eq:est:A2:iterate}
  1+\norm{(\mathcal{A}^{n}[u_{t_0},K_{t_0}])_{2}(t,\cdot,\cdot)}_{L^{\infty}}\leq \Bigl(\sum_{\ell=0}^{n}\frac{B_{\Lambda}^{\ell}}{\ell!}(t-t_0)^{\ell}\Bigr)\bigl(1+\norm{K_{t_0}}_{L^{\infty}}\bigr).
 \end{equation}
Similarly, it follows by induction that
\begin{multline}\label{eq:est:A1:iterate}
 1+\norm{(\mathcal{A}^{n}[u_{t_0},K_{t_0}])_{1}(t,\cdot)}_{L^{\infty}}\leq (1+\norm{u_{t_0}}_{L^{\infty}})\sum_{\ell=0}^{n}\frac{B_{f}^{\ell}}{\ell!}(t-t_{0})^{\ell} \\*
 +\frac{B_{g}}{B_{\Lambda}}(1+\norm{K_{t_{0}}}_{L^{\infty}})\sum_{k=0}^{n-1}\Bigl(\frac{B_{f}}{B_{\Lambda}}\Bigr)^{k}\sum_{\ell=1+k}^{n}\frac{B_{\Lambda}^{\ell}}{\ell!}(t-t_{0})^{\ell}-\frac{B_{g}}{B_{f}}\sum_{\ell=1}^{n}\frac{B_{f}^{\ell}}{\ell!}(t-t_{0})^{\ell}
\end{multline}
with $\sum_{k=0}^{-1}(\cdots)\vcc=0=\vcc \sum_{\ell=1}^{0}(\cdots)$. Moreover, we note that
\begin{equation*}
 \sum_{k=0}^{n-1}\Bigl(\frac{B_{f}}{B_{\Lambda}}\Bigr)^{k}\sum_{\ell=1+k}^{n}\frac{B_{\Lambda}^{\ell}}{\ell!}(t-t_{0})^{\ell}=\sum_{\ell=1}^{n}\sum_{k=0}^{\ell-1}\Bigl(\frac{B_{f}}{B_{\Lambda}}\Bigr)^{k}\frac{B_{\Lambda}^{\ell}}{\ell!}(t-t_{0})^{\ell}=\frac{B_{\Lambda}}{B_{\Lambda}-B_{f}}\sum_{\ell=1}^{n}\frac{B_{\Lambda}^{\ell}-B_{f}^{\ell}}{\ell!}(t-t_{0})^{\ell}.
\end{equation*}
Together with \eqref{eq:est:A1:iterate} we thus get
\begin{multline*}
 1+\norm{v_{n}(t,\cdot)}_{L^{\infty}}\\*
 \leq (1+\norm{u_{t_0}}_{L^{\infty}})\sum_{\ell=0}^{n}\frac{B_{f}^{\ell}}{\ell!}(t-t_{0})^{\ell}+\frac{B_{g}}{B_{\Lambda}-B_{f}}(1+\norm{K_{t_{0}}}_{L^{\infty}})\sum_{\ell=1}^{n}\frac{B_{\Lambda}^{\ell}-B_{f}^{\ell}}{\ell!}(t-t_{0})^{\ell}-\frac{B_{g}}{B_{f}}\sum_{\ell=1}^{n}\frac{B_{f}^{\ell}}{\ell!}(t-t_{0})^{\ell}
\end{multline*}
which finishes the proof
\end{proof}
The next lemma shows that the operator is contractive with respect to the $L^2$ norm.
\begin{lemma}\label{Lem:A:contractive:L2}
 Let $K_{0}\in L^{\infty}(I^2)$ and $t_{0}\in [0,T)$ and assume \cref{eq:Ass:Lambda,eq:Ass:f:g}. For 
 \begin{equation*}
  0<T_{*}\leq \frac{1}{2(2^{5/2}L_{g}(1+\norm{K_{0}}_{L^{\infty}(I^2)})\ee^{B_{\Lambda} T}+L_{f}+\sqrt{2}B_{g}+L_{\Lambda})}
 \end{equation*}
 the operator $\mathcal{A}$ is contractive on the set $\mathcal{S}_{K_0}\vcc=\{(u,K)\in C([t_0,t_{0}+T_{*}],L^{\infty}(I)\times L^{\infty}(I^2)) \mid 1+\norm{K(t,\cdot,\cdot)}_{C([t_0,t_{0}+T_{*}],L^{\infty}(I^2))}\leq (1+\norm{K_0}_{L^{\infty}(I^2)})\ee^{B_{\Lambda}T}\}$  with respect to $\norm{\cdot}_{C([t_{0},t_{0}+T_{*}],L^{2}(I)\times L^{2}(I^2))}$ for each $t_{0}<T$ as long as $t_{0}+T_{*}\leq T$. More precisely, under these conditions we have
 \begin{multline*}
  \norm{\mathcal{A}[u_1,K_1]-\mathcal{A}[u_2,K_2]}_{C([t_{0},t_{0}+T_{*}],L^{2}(I)\times L^{2}(I^2))}\\*
  \leq \frac{1}{2}\Bigl(\norm{u_1-u_2}_{C([t_{0},t_{0}+T_{*}],L^{2}(I))}+\norm{K_1-K_2}_{C([t_{0},t_{0}+T_{*}],L^{2}(I^2))}\Bigr).
 \end{multline*}
\end{lemma}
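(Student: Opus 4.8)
The plan is to estimate the two components of the operator difference separately in their natural $L^2$ norms and then to combine the bounds, absorbing the common prefactor $(t-t_0)\le T_{*}$ by choosing $T_{*}$ small. Throughout write $a\vcc=\norm{u_1-u_2}_{C([t_0,t_0+T_*],L^2(I))}$ and $b\vcc=\norm{K_1-K_2}_{C([t_0,t_0+T_*],L^2(I^2))}$. I start with the graph component, which is the simpler of the two: subtracting the two expressions for $\mathcal{A}_2$ the initial data cancel, leaving only the time integral of the $\Lambda$-difference. Moving the $L^2(I^2)$ norm inside the time integral and invoking the $L^2$-Lipschitz bound of \eqref{eq:Ass:Lambda} gives, for every $t\in[t_0,t_0+T_*]$,
\[
 \norm{\mathcal{A}_2[u_1,K_1](t,\cdot,\cdot)-\mathcal{A}_2[u_2,K_2](t,\cdot,\cdot)}_{L^2(I^2)}\le L_{\Lambda}\int_{t_0}^{t}\bigl(a+b\bigr)\dd{s}\le L_{\Lambda}T_{*}(a+b),
\]
so after taking the supremum in $t$ the $\mathcal{A}_2$-part contributes at most $L_{\Lambda}T_{*}(a+b)$.

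The substance of the proof is the first component. After differencing, $\mathcal{A}_1$ splits into the $f$-contribution, controlled by $L_{f}T_{*}a$ via the $L^2$-Lipschitz estimate in \eqref{eq:Ass:f:g}, and the bilinear interaction term $\int_{I}\bigl[K_1g(u_1)-K_2g(u_2)\bigr]\dd{y}$, where I abbreviate $g(u_i)\vcc=g(s,u_i(s,x),u_i(s,y))$. I would decompose the latter as $K_1\bigl[g(u_1)-g(u_2)\bigr]+\bigl(K_1-K_2\bigr)g(u_2)$. For the first summand I use the uniform kernel bound $\norm{K_1(s,\cdot,\cdot)}_{L^{\infty}}\le(1+\norm{K_0}_{L^{\infty}})\ee^{B_{\Lambda}T}$, which is available precisely because $(u_1,K_1)\in\mathcal{S}_{K_0}$, together with the Lipschitz estimate $\abs{g(u_1)-g(u_2)}\le L_g\bigl(\abs{u_1(s,x)-u_2(s,x)}+\abs{u_1(s,y)-u_2(s,y)}\bigr)$; for the second summand I use $\abs{g}\le B_g$.

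The decisive technical step is to pass from the $y$-integrals produced above to $L^2$ norms. Since $I$ has unit measure, the Cauchy--Schwarz inequality yields $\int_{I}\abs{u_1(s,y)-u_2(s,y)}\dd{y}\le\norm{u_1(s,\cdot)-u_2(s,\cdot)}_{L^2(I)}$ and, for the kernel, $\norm[\big]{\int_{I}\abs{K_1(s,\cdot,y)-K_2(s,\cdot,y)}\dd{y}}_{L^2(I)}\le\norm{K_1(s,\cdot,\cdot)-K_2(s,\cdot,\cdot)}_{L^2(I^2)}$. Taking the $L^2_x$ norm of the two summands and combining these bounds with the elementary inequality $(\alpha+\beta)^2\le2(\alpha^2+\beta^2)$ applied to the local term $\abs{u_1(s,x)-u_2(s,x)}$ and its $y$-average, I obtain a bound for the interaction term of the form $c\,L_g(1+\norm{K_0}_{L^{\infty}})\ee^{B_{\Lambda}T}T_{*}a+c'B_gT_{*}b$ with explicit numerical constants $c,c'$; it is the careful tracking of these Cauchy--Schwarz and triangle-inequality factors that produces the constants $2^{5/2}$ and $\sqrt2$ in the admissible range for $T_{*}$.

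Finally, I add the $\mathcal{A}_1$- and $\mathcal{A}_2$-estimates and pass to the product norm, using $\norm{(p,q)}\le\norm{p}+\norm{q}$, to arrive at $\norm{\mathcal{A}[u_1,K_1]-\mathcal{A}[u_2,K_2]}_{C([t_0,t_0+T_*],L^2(I)\times L^2(I^2))}\le C\,T_{*}(a+b)$, where $C$ is precisely the quantity appearing in the denominator of the stated bound on $T_{*}$; choosing $T_{*}$ so that $C\,T_{*}\le\tfrac12$ then yields the claimed contraction. I expect the term $\int_{I}K_1\bigl[g(u_1)-g(u_2)\bigr]\dd{y}$ to be the main obstacle: it is the only place where the uniform $L^{\infty}$-control of the kernel is indispensable, which forces the restriction to $\mathcal{S}_{K_0}$ and the appearance of the factor $\ee^{B_{\Lambda}T}$, and, since the Lipschitz assumptions \eqref{eq:Ass:Lambda} and \eqref{eq:Ass:f:g} only hold in $L^2$, its nonlocal averaging in $y$ must be tamed within the $L^2$ framework through the Cauchy--Schwarz reduction from $L^1_y$ to $L^2_y$.
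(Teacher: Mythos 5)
Your proposal is correct and follows essentially the same route as the paper: the same decomposition $K_1g(u_1)-K_2g(u_2)=K_1\bigl(g(u_1)-g(u_2)\bigr)+(K_1-K_2)g(u_2)$, the same use of membership in $\mathcal{S}_{K_0}$ to bound $\norm{K_1}_{L^{\infty}}$, the same Cauchy--Schwarz reduction of the $y$-averages to $L^{2}$ norms, and the same final absorption of the constant by the smallness of $T_{*}$. The only cosmetic difference is that you pull the $L^{2}$ norm inside the time integral (Minkowski) where the paper applies Cauchy's inequality in time to produce a factor $(t-t_0)^{1/2}$; both yield the same linear-in-$T_{*}$ bound.
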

\begin{proof}
 Let $(u_1,K_1),(u_2,K_2)\in \mathcal{S}_{K_{0}}$. For $\mathcal{A}_{2}$ we get together with Cauchy's inequality and Fubini's Theorem that
 \begin{multline*}
  \norm{\mathcal{A}_{2}[u_{1},K_1](t,\cdot,\cdot)-\mathcal{A}_{2}[u_{2},K_2](t,\cdot,\cdot)}_{L^{2}(I^2)}\\*
  =\biggl(\int_{I^2}\biggl(\int_{t_0}^{t}\Lambda(s,x,y,K_1(s,\cdot,\cdot),u_1(s,\cdot))-\Lambda(s,x,y,K_2(s,\cdot,\cdot),u_2(s,\cdot))\dd{s}\biggr)^{2}\dd{x}\dd{y}\biggr)^{1/2}\\*
  \leq \biggl(\int_{I^2}(t-t_0)\int_{t_0}^{t}\abs[\big]{\Lambda(s,x,y,K_1(s,\cdot,\cdot),u_1(s,\cdot))-\Lambda(s,x,y,K_2(s,\cdot,\cdot),u_2(s,\cdot))}^2\dd{s}\dd{x}\dd{y}\biggr)^{1/2}\\*
  =(t-t_0)^{1/2}\biggl(\int_{t_0}^{t}\norm[\big]{\Lambda(s,x,y,K_1(s,\cdot,\cdot),u_1(s,\cdot))- \Lambda(s,x,y,K_2(s,\cdot,\cdot),u_2(s,\cdot))}^2_{L^{2}(I^2)}\dd{s}\biggr)^{1/2}.
 \end{multline*}
 By means of \eqref{eq:Ass:Lambda} we deduce
 \begin{multline*}
  \norm{\mathcal{A}_{2}[u_1,K_1](t,\cdot,\cdot)-\mathcal{A}_{2}[u_{2},K_2](t,\cdot,\cdot)}_{L^{2}(I^2)}\\*
  \leq L_{\Lambda}(t-t_0)^{1/2}\biggl(\int_{t_0}^{t}\Bigl(\norm[\big]{K_1(s,\cdot,\cdot)-K_2(s,\cdot,\cdot)}_{L^{2}(I^2)}+\norm[\big]{u_1(s,\cdot)-u_2(s,\cdot)}_{L^{2}(I)}\Bigr)^2\dd{s}\biggr)^{1/2}.
 \end{multline*}
This finally yields
\begin{multline}\label{eq:contractivity:A2}
  \norm{\mathcal{A}_{2}[u_1,K_1]-\mathcal{A}_{2}[u_2,K_2]}_{C([t_0,t_0+T_*],L^{2}(I^2))}\\*
  \leq L_{\Lambda}T_{*}\Bigl(\norm[\big]{K_1(s,\cdot,\cdot)-K_2(s,\cdot,\cdot)}_{C([t_0,t_0+T_*],L^{2}(I^2))}+\norm[\big]{u_1(s,\cdot)-u_2(s,\cdot)}_{C([t_0,t_0+T_*],L^{2}(I))}\Bigr).
\end{multline}
For $\mathcal{A}_{1}$ we find similarly by means of Cauchy's inequality and Fubini's Theorem together with \eqref{eq:Ass:f:g} that
\begin{multline*}
 \norm{\mathcal{A}_{1}[u_1,K_1](t,\cdot)-\mathcal{A}_{1}[u_2,K_2](t,\cdot)}_{L^{2}(I)}\\*
 \leq \biggl(\int_{I}\abs[\bigg]{\int_{t_{0}}^{t}\int_{I}K_{1}(s,x,y)g(s,u_1(s,x),u_1(s,y))-K_2(s,x,y)g(s,u_2(s,x),u_2(s,y))\dd{y}\dd{s}}^{2}\dd{x}\biggr)^{1/2}\\*
 \shoveright{+\biggl(\int_{I}\abs[\bigg]{\int_{t_{0}}^{t}f(s,x,u_{1}(s,\cdot))-f(s,x,u_{2}(s,\cdot))\dd{s}}^{2}\dd{x}\biggr)^{1/2}}\\*
 \shoveleft{\leq (t-t_{0})^{1/2}\biggl(\int_{I}\int_{t_{0}}^{t}\int_{I}\Bigl(\norm{K_{1}}_{C([t_{0},t_{0}+T_{*}],L^{\infty})}\abs[\big]{g(s,u_1(s,x),u_1(s,y))-g(s,u_2(s,x),u_2(s,y))}}\\*
 \shoveright{+B_{g}\abs{K_{1}(s,x,y)-K_2(s,x,y)}\Bigr)^{2}\dd{y}\dd{s}\dd{x}\biggr)^{1/2}}\\*
 +L_{f}(t-t_{0})^{1/2}\biggl(\int_{t_{0}}^{t}\norm{u_{1}(s,\cdot)-u_{2}(s,\cdot)}_{L^{2}(I)}^{2}\dd{s}\biggr)^{1/2}.
\end{multline*}
Using \eqref{eq:Ass:f:g} together with Young's inequality and the properties of $\mathcal{S}_{K_0}$ we further deduce
\begin{multline*}
 \norm{\mathcal{A}_{1}[u_1,K_1](t,\cdot)-\mathcal{A}_{1}[u_2,K_2](t,\cdot)}_{L^{2}(I)}\\*
 \shoveleft{\leq \sqrt{2}(t-t_{0})^{1/2}\Biggl[L_{g}(1+\norm{K_{0}}_{L^{\infty}(I^2)})\ee^{B_{\Lambda} T}\biggl(\int_{I}\int_{t_{0}}^{t}\int_{I}\Bigl(\abs{u_1(s,y)-u_2(s,y)}+\abs{u_1(s,x)-u_2(s,x)}\Bigr)^2\biggr)^{1/2}}\\*
 \shoveright{+B_{g}\biggl(\int_{I}\int_{t_{0}}^{t}\int_{I}\abs{K_{1}(s,x,y)-K_2(s,x,y)}^{2}\dd{y}\dd{s}\dd{x}\biggr)^{1/2}\Biggr]}\\*
 +L_{f}(t-t_{0})^{1/2}\biggl(\int_{t_{0}}^{t}\norm{u_{1}(s,\cdot)-u_{2}(s,\cdot)}_{L^{2}(I)}^{2}\dd{s}\biggr)^{1/2}.
\end{multline*}
Cauchy's inequality together with Fubini's Theorem then implies
\begin{multline*}
 \norm{\mathcal{A}_{1}[u_1,K_1](t,\cdot)-\mathcal{A}_{1}[u_2,K_2](t,\cdot)}_{L^{2}(I)}\\*
 \shoveleft{\leq \sqrt{2}(t-t_{0})^{1/2}\Biggl[4L_{g}(1+\norm{K_{0}}_{L^{\infty}(I^2)})\ee^{B_{\Lambda} T}\biggl(\int_{t_{0}}^{t}\norm{u_1(s,\cdot)-u_2(s,\cdot)}_{L^{2}(I)}^{2}\dd{s}\biggr)^{1/2}}\\*
 \shoveright{+B_{g}\biggl(\int_{t_{0}}^{t}\norm{K_{1}(s,\cdot,\cdot)-K_2(s,\cdot,\cdot)}_{L^{2}(I^2)}^{2}\dd{s}\biggr)^{1/2}\Biggr]}\\*
 +L_{f}(t-t_{0})^{1/2}\biggl(\int_{t_{0}}^{t}\norm{u_{1}(s,\cdot)-u_{2}(s,\cdot)}_{L^{2}(I)}^{2}\dd{s}\biggr)^{1/2}.
\end{multline*}
This yields
\begin{multline*}
  \norm{\mathcal{A}_{1}[u_1,K_1]-\mathcal{A}_{1}[u_2,K_2]}_{C([t_{0},t_{0}+T_{*}],L^{2}(I))}\\*
 \leq T_{*}\Bigl(2^{5/2}L_{g}(1+\norm{K_{0}}_{L^{\infty}(I^2)})\ee^{B_{\Lambda} T}+L_{f}\Bigr)\norm{u_1-u_2}_{C([t_{0},t_{0}+T_{*}],L^{2}(I))}\\*
 +\sqrt{2}T_{*}B_{g}\norm{K_{1}-K_2}_{C([t_{0},t_{0}+T_{*}],L^{2}(I^2))}.
\end{multline*}
Together with \eqref{eq:contractivity:A2} we deduce
\begin{multline*}
 \norm{\mathcal{A}[u_1,K_1]-\mathcal{A}[u_2,K_2]}_{C([t_{0},t_{0}+T_{*}],L^{2}(I)\times L^{2}(I^2))}\\*
  \shoveleft{\vcc=\norm{\mathcal{A}_{1}[u_1,K_1]-\mathcal{A}_{1}[u_2,K_2]}_{C([t_{0},t_{0}+T_{*}],L^{2}(I))}+\norm{\mathcal{A}_{2}[u_1,K_1]-\mathcal{A}_{2}[u_2,K_2]}_{C([t_0,t_0+T_*],L^{2}(I^2))}}\\*
  \shoveleft{\leq T_{*}\bigl(2^{5/2}L_{g}(1+\norm{K_{0}}_{L^{\infty}(I^2)})\ee^{B_{\Lambda} T}+L_{f}+\sqrt{2}B_{g}+L_{\Lambda}\bigr)\times}\\*
  \times\Bigl(\norm{u_1-u_2}_{C([t_{0},t_{0}+T_{*}],L^{2}(I))}+\norm{K_1-K_2}_{C([t_{0},t_{0}+T_{*}],L^{2}(I^2))}\Bigr).
\end{multline*}
Thus, for 
\begin{equation*}
 T_{*}\leq \frac{1}{2(2^{5/2}L_{g}(1+\norm{K_{0}}_{L^{\infty}(I^2)})\ee^{B_{\Lambda} T}+L_{f}+\sqrt{2}B_{g}+L_{\Lambda})}
\end{equation*}
the claim follows.
\end{proof}
Moreover, we have the following a-priori estimate on solutions of the system \eqref{eq:gen:cont:limit}.

\begin{lemma}\label{Lem:K:apriori}
 Assume that $f\colon [0,T]\times I \times L^{\infty}(I,\R^{d})\to \R^{d}$ and $g\colon[0,T]\times (\R^{d})^2\to \R^{d}$ satisfy~\eqref{eq:Ass:f:g}. Moreover, assume that $\Lambda\colon [0,\infty)\times I\times I \times L^{\infty}(I\times I,\R) \times L^{\infty}(I,\R^d)\to \R$ satisfies \eqref{eq:Ass:Lambda}. Let $(u_{t_0},K_{t_0})\in L^{\infty}(I,\R^d)\times L^{\infty}(I^2,\R)$. Let $(u,K)$ solve \eqref{eq:gen:cont:limit} on $[t_{0},T_1]$ with $0\leq t_0< T_1\leq T$ and initial condition $(u(t_0,\cdot),K(t_0,\cdot,\cdot))=(u_{t_0},K_{t_0})$. Then, we have the estimates 
  \begin{equation*}
  \begin{aligned}
     \norm{u(t,\cdot)}_{L^{\infty}(I)}&\leq \bigl(1+\norm{u_{t_0}}_{L^{\infty}}\bigr)\ee^{B_f (t-t_0)}+\frac{B_g}{B_\Lambda-B_{f}}\bigl(1+\norm{K_0}_{L^{\infty}}\bigr)\bigl(\ee^{(B_{\Lambda}(t-t_0)}-\ee^{B_f(t-t_0)}\bigr)-1\\
    \norm{K(t,\cdot,\cdot)}_{L^{\infty}(I^2)}&\leq (1+\norm{K_{t_0}}_{L^{\infty}(I^2)})\ee^{B_{\Lambda}(t-t_{0})}-1.
  \end{aligned}
 \end{equation*}
 In particular, we have the  bounds
 \begin{equation*}
  \begin{aligned}
     \sup_{t\in [t_0,T_1]}\norm{u(t,\cdot)}_{L^{\infty}(I)}&\leq \bigl(1+\norm{u_{t_0}}_{L^{\infty}}\bigr)\ee^{B_f T_1}+\frac{B_g}{B_\Lambda-B_{f}}\bigl(1+\norm{K_0}_{L^{\infty}}\bigr)\bigl(\ee^{(B_{\Lambda}T_1}-\ee^{B_f T_1}\bigr)-1\\
     \sup_{t\in [t_0,T_1]}\norm{K(t,\cdot,\cdot)}_{L^{\infty}(I^2)}&\leq (1+\norm{K_{t_0}}_{L^{\infty}(I^2)})\ee^{B_{\Lambda}T_1}-1.
  \end{aligned}
 \end{equation*}

\end{lemma}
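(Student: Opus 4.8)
The plan is to run a Gr\"onwall argument on the scalar quantities $t\mapsto\norm{K(t,\cdot,\cdot)}_{L^{\infty}(I^2)}$ and $t\mapsto\norm{u(t,\cdot)}_{L^{\infty}(I)}$, working throughout with the integral form of \eqref{eq:gen:cont:limit}. Since the solution satisfies $(u,K)\in C^1([t_0,T_1],L^{\infty}(I,\R^d))\times C^1([t_0,T_1],L^{\infty}(I^2,\R))$, the identity
\[
 K(t,x,y)=K_{t_0}(x,y)+\int_{t_0}^{t}\Lambda(s,x,y,K(s,\cdot,\cdot),u(s,\cdot))\dd{s}
\]
and the analogous one for $u$ hold, and the two norms above are continuous in $t$; this lets me avoid differentiating the $L^{\infty}$-norms and instead derive closed scalar inequalities for them.

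First I would treat the kernel. Estimating the integral identity pointwise in $(x,y)$ by means of the growth bound $\abs{\Lambda(s,x,y,K,u)}\leq B_{\Lambda}(1+\norm{K}_{L^{\infty}})$ from \eqref{eq:Ass:Lambda} and then taking the essential supremum over $(x,y)$ gives
\[
 \norm{K(t,\cdot,\cdot)}_{L^{\infty}}\leq\norm{K_{t_0}}_{L^{\infty}}+B_{\Lambda}\int_{t_0}^{t}\bigl(1+\norm{K(s,\cdot,\cdot)}_{L^{\infty}}\bigr)\dd{s}.
\]
Setting $\beta(t)\vcc=1+\norm{K(t,\cdot,\cdot)}_{L^{\infty}}$ this reads $\beta(t)\leq\beta(t_0)+B_{\Lambda}\int_{t_0}^{t}\beta(s)\dd{s}$, so the integral form of Gr\"onwall's inequality yields $\beta(t)\leq\beta(t_0)\ee^{B_{\Lambda}(t-t_0)}$, which is precisely the asserted bound on $\norm{K}_{L^{\infty}}$.

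For the state variable I would use $\abs{g}\leq B_g$, the bound $\abs{f(s,\cdot,u)}\leq B_f(1+\norm{u}_{L^{\infty}})$ from \eqref{eq:Ass:f:g}, and $\abs{I}=1$ to obtain, after taking the essential supremum over $x$,
\[
 \norm{u(t,\cdot)}_{L^{\infty}}\leq\norm{u_{t_0}}_{L^{\infty}}+B_g\int_{t_0}^{t}\norm{K(s,\cdot,\cdot)}_{L^{\infty}}\dd{s}+B_f\int_{t_0}^{t}\bigl(1+\norm{u(s,\cdot)}_{L^{\infty}}\bigr)\dd{s}.
\]
Substituting the bound $\norm{K(s,\cdot,\cdot)}_{L^{\infty}}\leq\beta(t_0)\ee^{B_{\Lambda}(s-t_0)}$ just obtained and writing $\alpha(t)\vcc=1+\norm{u(t,\cdot)}_{L^{\infty}}$ turns this into the linear integral inequality $\alpha(t)\leq\alpha(t_0)+B_g\beta(t_0)\int_{t_0}^{t}\ee^{B_{\Lambda}(s-t_0)}\dd{s}+B_f\int_{t_0}^{t}\alpha(s)\dd{s}$. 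I would conclude by comparison with the solution $\bar\alpha$ of the linear ODE $\bar\alpha'=B_f\bar\alpha+B_g\beta(t_0)\ee^{B_{\Lambda}(t-t_0)}$ with $\bar\alpha(t_0)=\alpha(t_0)$, for which Gr\"onwall's inequality gives $\alpha\leq\bar\alpha$; solving it explicitly with the integrating factor $\ee^{-B_f(t-t_0)}$ and integrating $\ee^{(B_{\Lambda}-B_f)(t-t_0)}$ produces
\[
 \alpha(t)\leq\alpha(t_0)\ee^{B_f(t-t_0)}+\frac{B_g\beta(t_0)}{B_{\Lambda}-B_f}\bigl(\ee^{B_{\Lambda}(t-t_0)}-\ee^{B_f(t-t_0)}\bigr),
\]
which, recalling $\alpha(t_0)=1+\norm{u_{t_0}}_{L^{\infty}}$ and $\beta(t_0)=1+\norm{K_{t_0}}_{L^{\infty}}$, is the claimed estimate on $\norm{u}_{L^{\infty}}$; it is in fact the $n\to\infty$ limit of the iterate bound \eqref{eq:est:A1:iterate} established in Lemma~\ref{Lem:A:iterate}.

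The step requiring the most care is this last one: the forcing of the Gr\"onwall inequality for $\alpha$ is itself exponential, so one must carry the two distinct rates $B_{\Lambda}$ and $B_f$ through the integration and treat the degenerate case $B_{\Lambda}=B_f$ separately, where $\tfrac{1}{B_{\Lambda}-B_f}\bigl(\ee^{B_{\Lambda}(t-t_0)}-\ee^{B_f(t-t_0)}\bigr)$ is replaced by its limit $(t-t_0)\ee^{B_f(t-t_0)}$, recovering the formula noted in the Remark following Lemma~\ref{Lem:A:iterate}. Finally, the ``in particular'' bounds follow by taking the supremum over $t\in[t_0,T_1]$: each right-hand side is nondecreasing in $t-t_0\in[0,T_1-t_0]$ (for the bound on $\norm{u}_{L^{\infty}}$ this uses that $\tfrac{B_g}{B_{\Lambda}-B_f}\bigl(\ee^{B_{\Lambda}\tau}-\ee^{B_f\tau}\bigr)$ is increasing in $\tau\geq0$), so one may enlarge the exponents by replacing $t-t_0$ with $T_1$.
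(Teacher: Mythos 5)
Your argument is correct and coincides with the paper's own proof: both establish the $K$-bound first via the integral form of \eqref{eq:gen:cont:limit}, the growth bound in \eqref{eq:Ass:Lambda} and Gr\"onwall, then insert that bound as an exponential forcing term into the analogous integral inequality for $\norm{u(t,\cdot)}_{L^{\infty}}$ and apply Gr\"onwall (comparison with the linear ODE) again. The only cosmetic difference is that you bound the forcing by $\beta(t_0)\ee^{B_{\Lambda}(s-t_0)}$ rather than $\beta(t_0)\ee^{B_{\Lambda}(s-t_0)}-1$, which loses the harmless extra negative term $-\tfrac{B_g}{B_f}(\ee^{B_f(t-t_0)}-1)$ the paper carries along but still yields exactly the stated estimate.
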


\begin{proof}
 We start with the estimate on $K$. Since $(u,K)$ solves \eqref{eq:gen:cont:limit}, we have
 \begin{equation*}
  K(t,x,y)=K_{t_0}(x,y)+\int_{0}^{t}\Lambda(s,x,y,K(s,\cdot,\cdot),u(s,\cdot))\dd{s}.
 \end{equation*}
 By means of~\eqref{eq:Ass:Lambda} we get 
 \begin{equation*}
  \norm{K(t,\cdot,\cdot)}_{L^{\infty}(I^2)}\leq \norm{K_{t_0}}_{L^{\infty}(I^2)}+B_{\Lambda}\int_{t_0}^{t}(1+\norm{K(s,\cdot,\cdot)}_{L^{\infty}(I^2)})\dd{s}.
 \end{equation*}
 Gronwall's inequality then implies
 \begin{equation}\label{eq:est:K}
  1+\norm{K(t,\cdot,\cdot)}_{L^{\infty}(I^2)}\leq (1+\norm{K_{t_0}}_{L^{\infty}(I^2)})\ee^{B_{\Lambda}(t-t_{0})}.
 \end{equation}
 With this, the estimate on $u$ follows similarly noting first that
 \begin{equation*}
  u(t,x)=u_{t_0}+\int_{t_0}^{t}\int_{I}K(s,x,y)g(t,u(s,x),u(s,y))\dd{y}\dd{s}+\int_{t_0}^{t}f(s,x,u(s,\cdot))\dd{s}.
 \end{equation*}
 Thus, using again \eqref{eq:Ass:Lambda}, we get together with \eqref{eq:est:K} that
 \begin{equation*}
  \norm{u(t,\cdot)}_{L^{\infty}}\leq \norm{u_{t_0}}_{L^{\infty}}+B_g\int_{t_0}^{t}\Bigl((1+\norm{K_{t_0}}_{L^{\infty}(I^2)})\ee^{B_{\Lambda}(s-t_{0})}-1\Bigr)\dd{s}+B_{f}\int_{t_0}^{t}(1+\norm{u(s,\cdot)}_{L^{\infty}})\dd{s}.
 \end{equation*}
 By means of Gronwall's inequality one deduces that
 \begin{multline}
  1+\norm{u(t,\cdot)}_{L^{\infty}}\\*
  \leq \bigl(1+\norm{u_{t_0}}_{L^{\infty}}\bigr)\ee^{B_f (t-t_0)}+\frac{B_g}{B_\Lambda-B_{f}}\bigl(1+\norm{K_{t_0}}_{L^{\infty}}\bigr)\bigl(\ee^{(B_{\Lambda}(t-t_0)}-\ee^{B_f(t-t_0)}\bigr)-\frac{B_{g}}{B_{f}}(\ee^{B_{f}(t-t_0)}-1)
 \end{multline}
from which the claim follows.
\end{proof}
We can now give the proof of Proposition~\ref{Prop:continuous:well:posed}.

\begin{proof}[Proof of Proposition~\ref{Prop:continuous:well:posed}]
 As announced earlier, we argue along the lines of the proof of the classical contraction mapping theorem. However, since the operator $\mathcal{A}$ is only contractive with respect to the $L^2$ topology, some adjustments are needed. First, we fix $T_{*}\leq T$ according to Lemma~\ref{Lem:A:contractive:L2}. Next, we set $t_0=0$ and define the corresponding sequence $(v_n,J_n)$ as in \eqref{eq:Iterate} which is well-defined according to Lemma~\ref{Lem:operator:A:bounded}. Moreover, due to Lemma~\ref{Lem:A:iterate} the sequence $(J_n)_{n\in\N}$ is uniformly bounded in $C([0,T_*],L^{\infty}(I^2))$ with
 \begin{equation*}
  1+\norm{J_n(t,\cdot,\cdot)}_{C([0,T_*],L^{\infty}(I^2))}\leq (1+\norm{K_{0}}_{L^{\infty}})\ee^{B_{\Lambda} T_{*}}\leq (1+\norm{K_{0}}_{L^{\infty}})\ee^{B_{\Lambda} T} \qquad \text{for all }n\in\N_{0}.
 \end{equation*}
 Consequently, $(v_n,J_n)\in \mathcal{S}_{K_{0}}$ for all $n\in\N_{0}$ with $\mathcal{S}_{K_{0}}$ defined in Lemma~\ref{Lem:A:contractive:L2}. Thus according to this result, we have
 \begin{multline*}
  \norm{v_{n+1}-v_{n}}_{C([0,T_{*}],L^{2}(I))}+\norm{J_{n+1}-J_n}_{C([0,T_{*}],L^{2}(I^2))}\\*
   \leq \frac{1}{2}\Bigl(\norm{v_{n}-v_{n-1}}_{C([0,T_{*}],L^{2}(I))}+\norm{J_{n}-J_{n-1}}_{C([0,T_{*}],L^{2}(I^2))}\Bigr)
 \end{multline*}
which yields by iteration that  $(v_n,J_n)_{n\in \N}$ is a Cauchy sequence in $C([0,T_*],L^{2}(I)\times L^{2}(I^2))$. Consequently, there exists $(u,K)\in C([0,T_*],L^{2}(I)\times L^{2}(I^2))$ such that 
\begin{equation}\label{eq:iterate:limit}
 \norm{v_{n}-u}_{C([0,T_{*}],L^{2}(I))}+\norm{J_n-K}_{C([0,T_{*}],L^{2}(I^2))}\longrightarrow 0 \qquad \text{as } n\to\infty.
\end{equation}
For each $t\in [0,T_*]$ we then have
\begin{equation*}
 v_n(t,\cdot)\to u(t,\cdot) \quad \text{and}\quad J_{n}(t,\cdot,\cdot)\to K(t,\cdot,\cdot) \qquad \text{pointwise almost everywhere as } n\to\infty.
\end{equation*}
Thus, by means of Lemma~\ref{Lem:A:iterate} we have
\begin{equation}\label{eq:bounds:limit}
 \begin{aligned}
     \norm{u(t,\cdot)}_{L^{\infty}(I)}&\leq (1+\norm{u_0}_{L^{\infty}})\ee^{B_{f}t}+\frac{B_{g}}{B_{\Lambda}-B_{f}}(1+\norm{K_{0}}_{L^{\infty}})\bigl(\ee^{B_{\Lambda}t}-\ee^{B_{f}t)}\bigr)-1\\
     \norm{K(t,\cdot,\cdot)}_{L^{\infty}(I^2)}&\leq (1+\norm{K_{0}}_{L^{\infty}})\ee^{B_{\Lambda} t}-1.
 \end{aligned}
\end{equation}
Moreover, as a consequence of \eqref{eq:iterate:limit} we have $(u,K)=\mathcal{A}[u,K]$ and the structure of $\mathcal{A}$ thus immediately implies $(u,K)\in C^1([0,T_*],L^{\infty}(I)\times L^{\infty}(I^2))$ and $(u,K)$ is a solution of \eqref{eq:gen:cont:limit} on $[0,T_{*}]$. Due to \eqref{eq:bounds:limit} we have in particular $(u,K)\in\mathcal{S}_{K_0}$ and according to Lemma~\ref{Lem:K:apriori} any solution $(\hat{u},\hat{K})$ to \eqref{eq:gen:cont:limit} satisfies $(\hat{u},\hat{K})\in\mathcal{S}_{K_0}$. Thus uniqueness follows again from the contractivity in Lemma~\ref{Lem:A:contractive:L2} analogously to the classical contraction mapping theorem. To finish the proof, it remains to extend the solution to $[0,T]$ which can be done, as usual, by iterating the above procedure while we note that Lemma~\ref{Lem:A:iterate} ensures that the condition in the definition of $\mathcal{S}_{K_{0}}$ is preserved.
\end{proof}

\section{The continuum limit}\label{Sec:continuum:limit}

In this section we will give the proof of Theorem~\ref{Thm:cont:limit} using similar arguments as \cite{Med14,AyP21}.

\begin{proof}[Proof of Theorem~\ref{Thm:cont:limit}]
By means of \cref{eq:def:u,eq:gen:cont:limit} we have
 \begin{multline*}
  \frac{1}{2}\del_{t}\norm{u^{N}-u}_{L^{2}(I)}^{2}=\int_{I}\del_{t}\bigl(u^{N}(t,x)-u(t,x)\bigr)\bigl(u^{N}(t,x)-u(t,x)\bigr)\dd{x}\\*
  \shoveleft{=\int_{I^2}\Bigl[K^{N}(t,x,y)g(t,u^{N}(t,x),u^{N}(t,y))-K(t,x,y)g(t,u(t,x),u(t,y))}\Bigr]
  \bigl(u^{N}(t,x)-u(t,x)\bigr)\dd{y}\dd{x}\\*
  +\int_{I}\biggl[N\int_{\lfloor Nx\rfloor/N}^{(\lfloor Nx\rfloor+1)/N}f(t,\xi,u^{N}(t,\cdot))-f(t,x,u(t,\cdot))\dd{\xi}\biggr]\bigl(u^{N}(t,x)-u(t,x)\bigr)\dd{x}.
 \end{multline*}
 Rewriting, we get
  \begin{multline*}
  \frac{1}{2}\del_{t}\norm{u^{N}-u}_{L^{2}(I)}^{2}=\int_{I^2}\Bigl[\bigl(K^{N}(t,x,y)-K(t,x,y)\bigr)g(t,u^{N}(t,x),u^{N}(t,y))\\*
  +K(t,x,y)\bigl(g(t,u^{N}(t,x),u^{N}(t,y))-g(t,u(t,x),u(t,y))\bigr)\Bigr]\bigl(u^{N}(t,x)-u(t,x)\bigr)\dd{y}\dd{x}\\*
  \shoveleft{+\int_{I}\biggl[N\int_{\lfloor Nx\rfloor/N}^{(\lfloor Nx\rfloor+1)/N}f(t,\xi,u^{N}(t,\cdot))-f(t,\xi,u(t,\cdot))}\\*
  +f(t,\xi,u(t,\cdot))-f(t,x,u(t,\cdot))\dd{\xi}\biggr]\bigl(u^{N}(t,x)-u(t,x)\bigr)\dd{x}.
 \end{multline*}
Using the bounds on $f$ and $g$ from \eqref{eq:Ass:f:g} together with Cauchy's inequality we can estimate the right-hand side to get
  \begin{multline*}
  \frac{1}{2}\del_{t}\norm{u^{N}-u}_{L^{2}(I)}^{2}\leq B_{g}\int_{I}\biggl(\int_{I}(K^{N}(t,x,y)-K(t,x,y))^{2}\dd{y}\biggr)^{1/2}\abs{u^{N}(t,x)-u(t,x)}\dd{x}\\*
  +L_{g}\norm{K(t,\cdot,\cdot)}_{L^{\infty}(I^2)}\int_{I^2}\bigl(\abs{u^{N}(y)-u(y)}+\abs{u(x)-u^{N}(x)}\bigr)\abs{u^{N}(t,x)-u(t,x)}\dd{y}\dd{x}\\*
  +\Biggl(\biggl(\int_{I}\abs[\Big]{N\int_{\lfloor Nx\rfloor/N}^{(\lfloor Nx\rfloor+1)/N}f(t,\xi,u^{N}(t,\cdot))-f(t,\xi,u(t,\cdot))\dd{\xi}}^{2}\dd{x}\biggr)^{1/2}\\*
  +\int_{I}\abs[\Big]{N\int_{\lfloor Nx\rfloor/N}^{(\lfloor Nx\rfloor+1)/N}f(t,\xi,u(t,\cdot))-f(t,x,u(t,\cdot))\dd{\xi}}^{2}\dd{x}\biggr)^{1/2}\Biggr)\norm{u^{N}(t,\cdot)-u(t,\cdot)}_{L^{2}(I)}.
 \end{multline*}
 Applying Cauchy's inequality again, we further deduce together with Fubini's Theorem that
   \begin{multline*}
  \frac{1}{2}\del_{t}\norm{u^{N}-u}_{L^{2}(I)}^{2}\leq B_{g}\norm{K^{N}(t,\cdot,\cdot)-K(t,\cdot,\cdot)}_{L^{2}(I^2)}\norm{u^{N}(t,\cdot)-u(t,\cdot)}_{L^{2}(I)}\\*
  +2L_{g}\norm{K(t,\cdot,\cdot)}_{L^{\infty}(I^2)}\norm{u^{N}(t,\cdot)-u(t,\cdot)}_{L^{2}(I)}^{2}\\*
  +\Biggl(\biggl(\int_{I}N\int_{\lfloor N\xi\rfloor/N}^{(\lfloor N\xi\rfloor+1)/N}\abs[\big]{f(t,\xi,u^{N}(t,\cdot))-f(t,\xi,u(t,\cdot))}^{2}\dd{x}\dd{\xi}\biggr)^{1/2}\\*
  +\int_{I}\abs[\Big]{N\int_{\lfloor Nx\rfloor/N}^{(\lfloor Nx\rfloor+1)/N}f(t,\xi,u(t,\cdot))-f(t,x,u(t,\cdot))\dd{\xi}}^{2}\dd{x}\biggr)^{1/2}\Biggr)\norm{u^{N}(t,\cdot)-u(t,\cdot)}_{L^{2}(I)}.
 \end{multline*}
We set 
\begin{equation*}
 r_{N}\vcc=N\int_{\lfloor Nx\rfloor/N}^{(\lfloor Nx\rfloor+1)/N}f(t,\xi,u(t,\cdot))-f(t,x,u(t,\cdot))\dd{\xi}
\end{equation*}
such that Young's inequality together with \eqref{eq:Ass:f:g} then implies
\begin{multline}\label{eq:gen:cont:lim:1}
 \frac{1}{2}\del_{t}\norm{u^{N}-u}_{L^{2}(I)}^{2}\leq \frac{B_{g}}{2}\norm{K^{N}(t,\cdot,\cdot)-K(t,\cdot,\cdot)}_{L^{2}(I^2)}^2\\*
 +\Bigl(2L_{g}\norm{K(t,\cdot,\cdot)}_{L^{\infty}(I^2)}+\frac{B_{g}}{2}\Bigr)\norm{u^{N}(t,\cdot)-u(t,\cdot)}_{L^{2}(I)}^{2}\\*
 +\Bigl(\norm{f(t,\cdot,u^{N}(t,\cdot))-f(t,\cdot,u(t,\cdot))}_{L^{2}(I)}+\norm{r_{N}}_{L^{2}(I)}\Bigr)\norm{u^{N}(t,\cdot)-u(t,\cdot)}_{L^{2}(I)}\\*
 \shoveleft{\leq\frac{B_{g}}{2}\norm{K^{N}(t,\cdot,\cdot)-K(t,\cdot,\cdot)}_{L^{2}(I^2)}^2}\\*
 +\Bigl(2L_{g}\norm{K(t,\cdot,\cdot)}_{L^{\infty}(I^2)}+L_{f}+\frac{B_{g}+1}{2}\Bigr)\norm{u^{N}(t,\cdot)-u(t,\cdot)}_{L^{2}(I)}^{2}+\frac{1}{2}\norm{r_{N}}_{L^{2}(I)}^{2}.
\end{multline}
Similarly, we deduce from \cref{eq:def:u,eq:gen:cont:limit} that
\begin{multline*}
 \frac{1}{2}\del_{t}\norm{K^{N}(t,\cdot,\cdot)-K(t,\cdot,\cdot)}_{L^{2}}^{2}=\int_{I^2}\del_{t}\bigl(K^{N}(t,x,y)-K(t,x,y)\bigr)\bigl(K^{N}(t,x,y)-K(t,x,y)\bigr)\dd{y}\dd{x}\\*
  =\int_{I^2}\biggl[N^2 \int_{\frac{\lfloor Nx\rfloor}{N}}^{\frac{\lfloor Nx\rfloor+1}{N}}\int_{\frac{\lfloor Ny\rfloor}{N}}^{\frac{\lfloor Ny\rfloor+1}{N}}\Lambda(t,\xi,\eta,K^{N}(t,\cdot,\cdot),u^{N}(t,\cdot))-\Lambda(t,x,y,K(t,\cdot,\cdot),u(t,\cdot))\dd{\xi}\dd{\eta}\biggr]\times\\
 \times\bigl(K^{N}(t,x,y)-K(t,x,y)\bigr)\dd{y}\dd{x}. 
\end{multline*}
Together with Cauchy's inequality we can estimate the right-hand side as
\begin{multline}\label{eq:gen:cont:limit:2}
 \frac{1}{2}\del_{t}\norm{K^{N}(t,\cdot,\cdot)-K(t,\cdot,\cdot)}_{L^{2}}^{2}\\*
 \shoveleft{\leq\Biggl[\biggl(\int_{I^2}\biggl(N^2\int_{\frac{\lfloor Nx\rfloor}{N}}^{\frac{\lfloor Nx\rfloor+1}{N}}\int_{\frac{\lfloor Ny\rfloor}{N}}^{\frac{\lfloor Ny\rfloor+1}{N}}\Lambda(t,\xi,\eta,K^{N}(t,\cdot,\cdot),u^{N}(t,\cdot))}\\*\shoveright{-\Lambda(t,\xi,\eta,K(t,\cdot,\cdot),u(t,\cdot))\dd{\xi}\dd{\eta}\biggr)^2\dd{x}\dd{y}\biggr)^{1/2}}\\*
 \shoveleft{+\biggl(\int_{I^2}\biggl(N^2\int_{\frac{\lfloor Nx\rfloor}{N}}^{\frac{\lfloor Nx\rfloor+1}{N}}\int_{\frac{\lfloor Ny\rfloor}{N}}^{\frac{\lfloor Ny\rfloor+1}{N}}\Lambda(t,\xi,\eta,K(t,\cdot,\cdot),u(t,\cdot))\dd{\xi}\dd{\eta}}\\*
 \shoveright{-\Lambda(t,x,y,K(t,\cdot,\cdot),u(t,\cdot))\biggr)^2\dd{x}\dd{y}\biggr)^{1/2}\Biggr]\times}\\*
 \shoveright{\times\norm{(K^{N}(t,\cdot,\cdot)-K(t,\cdot,\cdot)}_{L^2}}\\*
 =\vcc \bigl[\norm{Q_N}_{L^2}+\norm{R_N}_{L^2}\bigr]\norm{(K^{N}(t,\cdot,\cdot)-K(t,\cdot,\cdot)}_{L^2}.
\end{multline}
To estimate the integral given by $Q_N$ further, we apply once more Cauchy's inequality and use Fubini's Theorem to deduce together with \eqref{eq:Ass:Lambda} that
\begin{multline}\label{eq:gen:cont:limit:3}
 \norm{Q_N}_{L^2}\\*
 \shoveleft{\leq \biggl(\int_{I^2}N^2\int_{\lfloor Nx\rfloor/N}^{(\lfloor Nx\rfloor+1)/N}\int_{\lfloor Ny\rfloor/N}^{(\lfloor Ny\rfloor+1)/N}\Bigl(\Lambda(t,\xi,\eta,K^{N}(t,\cdot,\cdot),u^{N}(t,\cdot))}\\*
 \shoveright{-\Lambda(t,\xi,\eta,K(t,\cdot,\cdot),u(t,\cdot))\Bigr)^2\dd{\xi}\dd{\eta}\dd{x}\dd{y}\biggr)^{1/2}}\\*
 \shoveleft{= \biggl(\int_{I^2}N^2\int_{\lfloor N\xi\rfloor/N}^{(\lfloor N\xi\rfloor+1)/N}\int_{\lfloor N\eta\rfloor/N}^{(\lfloor N\eta\rfloor+1)/N}\Bigl(\Lambda(t,\xi,\eta,K^{N}(t,\cdot,\cdot),u^{N}(t,\cdot))}\\*
 \shoveright{-\Lambda(t,\xi,\eta,K(t,\cdot,\cdot),u(t,\cdot))\Bigr)^2\dd{x}\dd{y}\dd{\xi}\dd{\eta}\biggr)^{1/2}}\\*
 \shoveleft{= \biggl(\int_{I^2}\Bigl(\Lambda(t,\xi,\eta,K^{N}(t,\cdot,\cdot),u^{N}(t,\cdot))-\Lambda(t,\xi,\eta,K(t,\cdot,\cdot),u(t,\cdot))\Bigr)^2\dd{\xi}\dd{\eta}\biggr)^{1/2}}\\*
 \shoveleft{= \norm[\big]{\Lambda(t,\cdot,\cdot,K^{N}(t,\cdot,\cdot),u^{N}(t,\cdot))-\Lambda(t,\cdot,\cdot,K(t,\cdot,\cdot),u(t,\cdot))}_{L^2}}\\*
 \leq L_{\Lambda}\bigl(\norm{K^{N}(t,\cdot,\cdot)-K(t,\cdot,\cdot)}_{L^2}+\norm{u^{N}(t,\cdot)-u(t,\cdot)}_{L^2}\bigr).
\end{multline}
Summarising \cref{eq:gen:cont:limit:2,eq:gen:cont:limit:3} we obtain together with Young's inequality that
\begin{multline*}
 \frac{1}{2}\del_{t}\norm{K^{N}(t,\cdot,\cdot)-K(t,\cdot,\cdot)}_{L^{2}}^{2}\\*
 \leq \bigl[L_{\Lambda}\bigl(\norm{K^{N}(t,\cdot,\cdot)-K(t,\cdot,\cdot)}_{L^2}+\norm{u^{N}(t,\cdot)-u(t,\cdot)}_{L^2}\bigr)+\norm{R_N}_{L^2}\bigr]\norm{(K^{N}(t,\cdot,\cdot)-K(t,\cdot,\cdot)}_{L^2}\\*
 \leq \frac{3L_{\Lambda}+1}{2}\norm{(K^{N}(t,\cdot,\cdot)-K(t,\cdot,\cdot)}_{L^2}^2+\frac{L_{\Lambda}}{2}\norm{u^{N}(t,\cdot)-u(t,\cdot)}_{L^2}^2+\frac{1}{2}\norm{R_N}_{L^2}^2.
\end{multline*}
Together with \eqref{eq:gen:cont:lim:1} this yields
\begin{multline*}
 \frac{1}{2}\del_{t}\Bigl(\norm{u^{N}-u}_{L^{2}(I)}^{2}+\norm{K^{N}(t,\cdot,\cdot)-K(t,\cdot,\cdot)}_{L^{2}}^{2}\Bigr)\\*
 \leq \Bigl(2L_{g}\norm{K(t,\cdot,\cdot)}_{L^{\infty}(I^2)}+L_{f}+\frac{B_{g}+L_{\Lambda}+1}{2}\Bigr)\norm{u^{N}(t,\cdot)-u(t,\cdot)}_{L^{2}(I)}^{2}\\*
 +\frac{3L_{\Lambda}+B_{g}+1}{2}\norm{K^{N}(t,\cdot,\cdot)-K(t,\cdot,\cdot)}_{L^{2}(I^2)}^2+\frac{1}{2}\bigl(\norm{r_N}_{L^2}^2+\norm{R_N}_{L^2}^2\bigr)\\*
 \leq \Bigl(2L_{g}\norm{K(t,\cdot,\cdot)}_{L^{\infty}(I^2)}+L_{f}+\frac{3L_{\Lambda}+B_{g}+1}{2}\Bigr)\Bigl(\norm{u^{N}(t,\cdot)-u(t,\cdot)}_{L^{2}(I)}^{2}+\norm{K^{N}(t,\cdot,\cdot)-K(t,\cdot,\cdot)}_{L^{2}}^{2}\Bigr)\\*
 +\frac{1}{2}\bigl(\norm{r_N}_{L^2}^2+\norm{R_N}_{L^2}^2\bigr).
\end{multline*}
Integrating this inequality, we find
\begin{multline*}
 \norm{u^{N}-u}_{L^{2}(I)}^{2}+\norm{K^{N}(t,\cdot,\cdot)-K(t,\cdot,\cdot)}_{L^{2}}^{2}\\*
 \leq \Bigl(\norm{u^{N}(0,\cdot)-u(0,\cdot)}_{L^{2}(I)}^{2}+\norm{K^{N}(0,\cdot,\cdot)-W(\cdot,\cdot)}_{L^{2}}^{2}\Bigr)\ee^{4L_{g}\int_{0}^{t}\norm{K(s,\cdot,\cdot)}_{L^{\infty}(I^2)}\dd{s}+(2L_{f}+3L_{\Lambda}+B_{g}+1)t}\\*
 +\int_{0}^{t}\bigl(\norm{r_N(s)}_{L^2}^2+\norm{R_N(s)}_{L^2}^2\bigr)\ee^{4L_{g}\int_{s}^{t}\norm{K(\tau,\cdot,\cdot)}_{L^{\infty}(I^2)}\dd{\tau}+(2L_{f}+3L_{\Lambda}+B_{g}+1)(t-s)}\dd{s}.
\end{multline*}
On a fixed time interval $[0,T]$ we can estimate the right-hand side uniformly as
\begin{multline}\label{eq:gen:cont:limit:5}
 \norm{u^{N}-u}_{L^{2}(I)}^{2}+\norm{K^{N}(t,\cdot,\cdot)-K(t,\cdot,\cdot)}_{L^{2}}^{2}\\*
 \leq \Bigl(\norm{u^{N}(0,\cdot)-u(0,\cdot)}_{L^{2}(I)}^{2}+\norm{K^{N}(0,\cdot,\cdot)-W(\cdot,\cdot)}_{L^{2}}^{2}+\int_{0}^{T}\bigl(\norm{r_N(s)}_{L^2}^2+\norm{R_N(s)}_{L^2}^2\bigr)\dd{s}\Bigr)\times\\*
 \times \ee^{4L_{g}\int_{0}^{T}\norm{K(s,\cdot,\cdot)}_{L^{\infty}(I^2)}\dd{s}+(2L_{f}+3L_{\Lambda}+B_{g}+1)T}.
\end{multline}
From Lebesgue's differentiation theorem together with dominated convergence we deduce 
\begin{equation*}\label{eq:gen:cont:limit:4}
 \norm{r_{N}}_{L^{2}(I)}\to 0 \quad \text{and}\quad \norm{R_N}_{L^2}\to 0 \qquad \text{as } N\to \infty.
\end{equation*}
Thus, by dominated convergence, for $N\to\infty$ the term in parenthesis in \eqref{eq:gen:cont:limit:5} converges to zero which finishes the proof.
\end{proof}

\end{document}